\renewcommand{\emptyset}{\varnothing}
\newcommand{\E}{\mathbf{E}}
\renewcommand{\P}{\mathbf{P}}
\newcommand{\Aa}{\mathcal{A}}
\newcommand{\TT}{\mathbb{T}}
\newcommand{\ZZ}{\mathbb{Z}}
\newcommand{\1}{\mathbf{1}}
\newcommand{\f}{\frac}
\newcommand{\RR}{\mathbb{R}}
\newcommand{\norm}[1]{\lVert #1 \rVert}
\newcommand*\proc{{\mathpalette\bigcdot@{.7}}}
\newcommand*\bigcdot@[2]{\mathbin{\vcenter{\hbox{\scalebox{#2}{$\m@th#1\bullet$}}}}}
\DeclareMathOperator{\Poi}{Poi}
\DeclareMathOperator{\Ber}{Bernoulli}
\DeclareMathOperator{\Bernoulli}{Bernoulli}
\DeclareMathOperator{\Bin}{Bin}
\DeclarePairedDelimiter\abs{\lvert}{\rvert}%
\newcommand{\viu}[2]{a_{#1,#2}}
\newcommand{\vi}[1]{a_{#1}}
\newcommand{\pnrelbar}{%
  \linethickness{\dimen2}%
  \sbox\z@{$\m@th\prec$}%
  \dimen@=1.1\ht\z@
  \begin{picture}(\dimen@,.4ex)
  \roundcap
  \put(0,.2ex){\line(1,0){\dimen@}}
  \put(\dimexpr 0.5\dimen@-.2ex\relax,0){\line(1,1){.4ex}}
  \end{picture}%
}
\newcommand{\precneq}{\mathrel{\vcenter{\hbox{\text{\prec@neq}}}}}
\newcommand{\prec@neq}{%
  \dimen2=\f@size\dimexpr.04pt\relax
  \oalign{%
    \noalign{\kern\dimexpr.2ex-.5\dimen2\relax}
    $\m@th\prec$\cr
    \noalign{\kern-.5\dimen2}
    \hidewidth\pnrelbar\hidewidth\cr
  }%
}
\newtheorem{thm}{Theorem}
\newtheorem{lemma}[thm]{Lemma}
\newtheorem{prop}[thm]{Proposition}
\newtheorem{cor}[thm]{Corollary}
\newtheorem{question}[thm]{Open Question}
\theoremstyle{remark}
\newtheorem{remark}[thm]{Remark}
\theoremstyle{definition}
\newtheorem{define}[thm]{Definition}
\newcommand{\pgfprec}{\preceq_{\text{pgf}}}
\newcommand{\pgfsucc}{\succeq_{\text{pgf}}}
\newcommand{\stprec}{\preceq_{\text{st}}}
\newcommand{\icvprec}{\preceq_{\text{icv}}}
\newcommand{\id}{\mathrm{id}}
\begin{document}

\title{Stochastic orders and the frog model}

\author{Tobias Johnson}
\address{Department of Mathematics, New York University}
\email{tobias.johnson@nyu.edu}

\author{Matthew Junge}
\address{Department of Mathematics, Duke University}
\email{jungem@math.duke.edu}
\thanks{The first author was partially supported by NSF grant DMS-1401479.}

\keywords{Frog model, transience, recurrence, phase transition, stochastic orders, increasing concave order, probability generating function order}

\subjclass[2010]{60K35, 60J80, 60J10}

\date{April 6, 2017}

\begin{abstract}
The \emph{frog model} 
  starts with one active particle at the root of a graph and some number of dormant particles at all nonroot vertices.
Active particles follow independent random paths, waking all inactive particles they encounter. 
We prove that certain frog model statistics are monotone in the initial configuration for two nonstandard stochastic dominance relations: the increasing concave and the probability generating function orders.

This extends many canonical theorems.
We connect recurrence for random initial configurations to recurrence for deterministic configurations. 
Also,  the limiting shape of activated sites on the integer lattice respects both of these orders. 
Other implications include monotonicity results on transience of the frog model where the number of frogs per vertex decays away from the origin, on survival of the frog model with death, and on the time to visit a given vertex in any frog model. 
\vspace{12pt}

Le \emph{mod\`ele de la grenouille} se base sur un graphe contenant une particule active \`a sa racine et un certain nombre de particules dormantes sur ses autres sites. Les particules actives suivent des chemins al\'eatoires, et \'eveillent les particules inactives qu'elles rencontrent. Nous montrons que certaines statistiques des mod\`eles de grenouilles sont monotones en leur configuration initiale pour deux relations d'ordre: l'ordre concave croissant et l'ordre des fonctions g\'en\'eratrices.

Ces r\'esultats \'etendent de nombreux th\'eor\`emes canoniques. On \'etablit un lien entre la r\'ecurrence de la configuration initiale et la r\'ecurrence de configurations d\'eterministes. De plus, dans le graphe naturel des entiers, la forme limite des sites activ\'es respecte ces deux relations d'ordre. D'autres cons\'equences concernent des r\'esultats de monotonicit\'e sur la transience du mod\`ele de grenouille o\`u le nombre de grenouilles par site diminue lorsque l'on s'\'eloigne de l'origine, sur la survie dans un mod\`ele int\'egrant la mort, et sur les temps de visite a un site donn\'e dans n'importe quel mod\`ele de grenouille.
\end{abstract}

\maketitle

\section{Introduction} \label{sec:intro}

Let $G$ be a countable collection of vertices, one of which we distinguish as the root and call $\emptyset$. 
A general frog model $(\eta,S)$ starts with one active particle at $\emptyset$ and $\eta(v)$ dormant particles at each $v \neq \emptyset$. 
The $i$th particle at $v$ starting from its time of activation moves according to the path $S_\proc(v,i)$, with $S_0(v,i)$ assumed equal
to $v$. 
When an active particle visits a site containing dormant particles, \emph{all} of the dormant particles activate.
The particles move in discrete time, though this will be unimportant since most of the
properties of the frog model we consider depend only on
the particles' paths and not on the time they make their moves.
The particles are traditionally called frogs, and we continue  the zoomorphism.
Typically, $G$ is a graph, the frog paths $(S_\proc(v,i))_{v\in G,i\geq 1}$ are 
independent random walks, the frog counts $(\eta(v))_{v\in G}$ are either deterministic or i.i.d.,
and $(S_\proc(v,i))_{v\in G,i\geq 1}$ and $(\eta(v))_{v\in G}$ are independent of each other.
We will not belabor an example like the frog model with simple random walk paths on $\mathbb{Z}^d$
and i.i.d.-$\Poi(\mu)$ frogs per vertex by stating that the frog paths
are mutually independent, and that the frog counts and paths are independent.

Our main result is about two classes of frog model functionals we call \emph{icv} and \emph{pgf} \emph{statistics}.
The prime example is the number of visits to $\varnothing$ in the frog model $(\eta,S)$ over all
time, which we denote $r(\eta,S)$.
A realization of the frog model is called \emph{recurrent} if $r(\eta,S)= \infty$ 
and \emph{transient} otherwise.
In \cite{telcs1999}, the frog model with one sleeping frog per site and simple random walk paths
 is shown to be recurrent almost surely on $\mathbb Z^d$ for all $d$. This is further refined in \cite{random_recurrence}, which exhibits a threshold in $\alpha$ at which a frog model with $\Bernoulli(\alpha \|x\|^{-2})$ frogs at each $x \in \mathbb Z^d$ switches from transience to recurrence. 
 A similar phenomenon occurs 
when the walks have a bias in one direction: \cite{recurrence} finds that on $\mathbb Z$,
  the model is recurrent if and only if the number of sleeping frogs per site has infinite logarithmic moment.
  A sufficient condition for recurrence in this setting on $\mathbb{Z}^d$ was given in \cite{new_drift} and 
  improved on in \cite{01frog}.
  In our papers \cite{HJJ1,HJJ2,JJ3_log}, we prove that the frog model with simple random walk paths
  on the infinite $d$-ary tree $\TT_d$ 
  switches from transient to recurrent either when the density of frogs increases with
  $d$ held fixed, or when $d$ decreases with the density held fixed.

\subsection*{Statement of main theorem} 
Our main result is a comparison theorem relating icv and pgf statistics of a frog model
(see \thref{def:declining})
when we vary the distribution of the initial configuration $\eta$. Our original motivation was that
while the most convenient setting in our experience has Poisson-distributed frog counts, the most basic questions
assume a deterministic number of frogs per site.
As an example, in \cite{HJJ2} we showed the existence of a recurrence phase on the $d$-ary tree with 
Poisson frogs per site for any $d\geq 2$. 
This left open the existence of a recurrence phase
for initial conditions other than i.i.d.\ Poisson. 
For instance, for large enough $k$, is the frog model recurrent on the $d$-ary tree with
$k$ frogs per site?
With our previous tools, we could answer this question only for the case $d=2$ \cite{HJJ1},
but our comparison theorem tidily transfers the result from
Poisson to deterministic initial conditions (see \thref{cor:treek}).

If the initial condition $\eta(v)$ is dominated by $\eta'(v)$ in the usual stochastic order, 
then we can couple the corresponding frog models and deduce that $f(\eta, S)$ is dominated by $f(\eta',S)$ for any statistic $f$ that is increasing in $\eta$. 
This is not helpful for the problem described above, 
since we cannot relate a Poisson random variable to the constant~$k$ in this stochastic order.
Instead, our main theorem will show that if $\eta(v)$ is dominated by $\eta'(v)$ in a weaker
stochastic order, then $f(\eta,S)$ will be dominated by $f(\eta',S)$ for all $f$ in a certain
class of statistics.

First, we define the two weaker stochastic orders we use (see Section~\ref{sec:orders} for
further discussion).
For two random variables $X$ and $Y$ taking
values in $[0,\infty]$, we say that $X$ is dominated
by $Y$ in the \emph{increasing concave order} if $\E f(X)\leq \E f(Y)$ for all
bounded increasing concave functions $f\colon[0,\infty)\to \RR$, with $f(\infty)$
taken as $\lim_{x\to\infty} f(x)$. We denote this by $X\icvprec Y$. 
We say that $X$ is dominated by $Y$ in the \emph{probability generating
function order} if $\E t^X \geq \E t^Y$ for all $t\in(0,1)$, with $t^{\infty}$ interpreted
as $0$, and we denote this by $X\pgfprec Y$. From now on, we abbreviate increasing concave
order to \emph{icv order} and probability generating function order to \emph{pgf order}.
Since $x\mapsto 1-t^x$ is a bounded increasing concave
function, $X\icvprec Y$ implies $X\pgfprec Y$.
The icv order has come up several times in discrete probability, most notably
in first passage percolation \cite{BK,Marchand}. See also \cite{Zerner} for an application to random
walk in a random potential. In these papers, the relation
$\pi_1\icvprec \pi_2$ is referred to as $\pi_2$ being \emph{more variable} than $\pi_1$.
The only use of the pgf order that we know of in discrete probability is our own in \cite{HJJ1},
though see \cite{TRZ,LT} for some applications in signal processing and wireless networks
under the name \emph{Laplace transform order}.

Next, we define the classes of statistics covered by our main theorems.
Roughly speaking, we call a function of the frog model an \emph{icv statistic} if it increases when
a frog is added to the model, but when two frogs are added at the same vertex
it increases less than by the separate addition of each of them. The \emph{pgf statistics} form
a more restrictive class of frog model functionals that obey a higher order version of this property.
As we will prove in Section~\ref{sec:applications},
many natural frog model statistics fall into these classes, most notably
the count $r(\eta, S)$ of visits to the root.

Before we give the definitions, we will need to introduce some notation.
Let 
\begin{align*}
  \{\eta(v),S_\proc(v,i)\colon\,v\in G, i\geq 1\}
\end{align*}
be a deterministic collection of frog
counts and paths. For any path $P_\proc$, let $\sigma_{P_\proc}(\eta,S)$ denote a new frog model
with an extra frog of path $P_\proc$ added at $P_0$. That is, $\sigma_{P_\proc}(\eta,S)=(\eta',S')$, where
$\eta'$ is identical to $\eta$ except that $\eta'(P_0)=\eta(P_0)+1$, and $S'$ is identical
to $S$ except that $S'_\proc(P_0,\eta(P_0)+1)=P_\proc$. For any frog model statistic
$f(\eta,S)$, define
\begin{align*}
  \Delta_{P_\proc} f(\eta,S) = f(\sigma_{P_\proc}(\eta,S)) - f(\eta,S),
\end{align*}
the change in $f$ when a frog with path $P_\proc$ is added to the model.

\begin{define}\thlabel{def:declining}  
  Let $f$ be a functional of the frog model taking values in $[0,\infty]$.
  We call $f$ an \emph{icv statistic} if the following conditions hold for all $(\eta,S)$
  and all paths $P^1_\proc,\ldots,P^m_\proc$ starting at the same vertex:
  \begin{enumerate}[label=(\roman*)]
    \item  \label{main.condition}
      for $m= 1,2$,
      \begin{align*}
        (-1)^m\Delta_{P^1_\proc}\cdots\Delta_{P^m_\proc}f(\eta,S)\leq 0,
      \end{align*}
      whenever all quantities in the expansion of the left hand side of the inequality are finite;
    \item if $f(\eta,S)=\infty$, then 
      $f\bigl(\sigma_{P^1_\proc}(\eta,S)\bigr)=\infty$; \label{infinite.increasing}
    \item if $f\bigl(\sigma_{P^1_\proc}\sigma_{P^2_\proc}(\eta,S)\bigr)=\infty$, then
      $f\bigl(\sigma_{P^i_\proc}(\eta,S)\bigr)=\infty$ for either $i=1$ or $i=2$.\label{infinite.concave}
  \end{enumerate}
  If $f$ satisfies conditions~\ref{infinite.increasing} and \ref{infinite.concave},
  and it satisfies condition~\ref{main.condition} not just for $m=1,2$ but for 
  all $m\geq 1$, then we call $f$ a \emph{pgf statistic}.
  In either case, we call the statistic \emph{continuous} if
  the condition
  \begin{align*}
    \eta_k(v)\nearrow\eta(v) \text{ as $k\to\infty$ for all $v\in G$}
  \end{align*}
  implies that
  $f(\eta_k,S)\nearrow f(\eta,S)$ as $k\to\infty$.
\end{define}
The $m=1$ case of \ref{main.condition} is 
the condition that $f$ increases when a new frog
is added. To make the $m=2$ case more transparent, we can restate it as
\begin{align*}
  f(\sigma_{P^1_\proc}\sigma_{P^2_\proc}(\eta,S)) - f(\sigma_{P^1_\proc}(\eta,S))
    -f(\sigma_{P^2_\proc}(\eta,S)) + f(\eta,S)\leq 0.
\end{align*}
Shifting terms around, we have the equivalent condition
\begin{align*}
  f(\sigma_{P^1_\proc}\sigma_{P^2_\proc}(\eta,S)) -f(\eta,S) 
    \leq \Bigl(f(\sigma_{P^1_\proc}(\eta,S)) -f(\eta,S)\Bigr)
    +\Bigl(f(\sigma_{P^2_\proc}(\eta,S)) - f(\eta,S)\Bigr),
\end{align*}
which states that the gain to the statistic by adding two frogs at the same
vertex is less than the combined gain of adding each frog separately, as we mentioned earlier.

Conditions~\ref{infinite.increasing} and~\ref{infinite.concave} are essentially the conditions
of being increasing and concave extended to apply when the statistic takes infinite values.
We also mention that condition~\ref{infinite.concave} implies the apparently stronger condition 
that for any $(\eta,S)$
and any paths $P^1_\proc,\ldots,P^m_\proc$ starting at the same vertex, if
$f\bigl(\sigma_{P^1_\proc}\cdots\sigma_{P^m_\proc}(\eta,S)\bigr)=\infty$, then
$f\bigl(\sigma_{P^i_\proc}(\eta,S)\bigr)=\infty$ for some $i$. Indeed,
suppose that $f\bigl(\sigma_{P^1_\proc}\cdots\sigma_{P^m_\proc}(\eta,S)\bigr)=\infty$.
Let $i_1,\ldots,i_\ell$ be any minimal set of indices such that
$f\bigl(\sigma_{P^{i_1}_\proc}\cdots\sigma_{P^{i_\ell}_\proc}(\eta,S)\bigr)=\infty$.
If $\ell\geq 2$, then with
$(\eta',S')=\sigma_{P^{i_3}_\proc}\cdots\sigma_{P^{i_\ell}_\proc}(\eta,S)$, we have
$f\bigl(\sigma_{P^{i_1}_\proc}\sigma_{P^{i_2}_\proc}(\eta',S')\bigr)=\infty$ even though 
$f\bigl(\sigma_{P^{i_1}_\proc}(\eta',S')\bigr)<\infty$ and $f\bigl(\sigma_{P^{i_2}_\proc}(\eta',S')\bigr)<\infty$.
This is impossible by condition~\ref{infinite.concave}. Hence
$\ell=1$, proving the stronger condition.

\begin{remark}
  The definition of icv and pgf statistics could be made in a more abstract setting.
  Suppose that $\mathcal{X}$ is the space of
  point process configurations on a space~$S$.
  For any $f\colon\mathcal{X}\to[0,\infty]$,
  $\chi\in\mathcal{X}$, and $x\in S$,  define $\Delta_xf(\chi)=f(\chi+\delta_x)-f(\chi)$.
  We could then call $f$ an icv statistic if for any $x_1,\ldots,x_n\in S$ and $n=1,2$,
  \begin{align*}
    (-1)^n\Delta_{x_1}\cdots\Delta_{x_n} f(\chi)\leq 0,
  \end{align*}
  and if statements analogous to condition~\ref{infinite.increasing}
  and \ref{infinite.concave} held. With $n=1,2$ replaced by $n\geq 1$,
  we would call $f$ a pgf statistic.
  Our previous definitions are equivalent to these when $S$ consists of paths in $G$.
  It is worth noting that being an icv statistic is the analogue in this discrete context
  of the usual notion of being increasing and concave, and being a pgf statistic is the analogue
  of having completely monotone derivative (see \eqref{eq:cmd}).
\end{remark}

With these stochastic orders and classes of statistics defined, we can finally
state our main result:

\begin{thm} \thlabel{thm:comparison}
 Assume that the frog paths $S_\proc(v,i)$ and counts $\eta(v)$ and $\eta'(v)$ are
 mutually independent for all $v$ and $i$, and that the paths $S_\proc(v,i)$ at a particular vertex~$v$ 
 are identically distributed for all $i$. 
 \begin{enumerate}[label = (\alph*)]
   \item If $f$ is a continuous icv statistic and
    $\eta(v)\icvprec\eta'(v)$ for all $v$, then $f(\eta,S)\icvprec f(\eta',S)$.
    \label{item:comparison.icv}
   \item If $f$ is a continuous pgf statistic and
    $\eta(v)\pgfprec\eta'(v)$ for all $v$, then $f(\eta,S)\pgfprec f(\eta',S)$.
    \label{item:comparison.pgf}
 \end{enumerate}
\end{thm}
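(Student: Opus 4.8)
The plan is to fix all the random paths, reduce to changing the law of $\eta$ at a single vertex, and analyze the statistic there through a one-dimensional test sequence to which the difference inequalities of \thref{def:declining} and the exchangeability of the paths can be applied.

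\emph{Reduction to one vertex.} Enumerate the vertices as $v_1,v_2,\ldots$ and let $\eta^{(k)}$ agree with $\eta'$ on $\{v_1,\ldots,v_k\}$ and with $\eta$ elsewhere. If changing the law at a single vertex preserves the relevant order — that is, replacing $\eta(v)$ by an icv-larger (resp.\ pgf-larger) count at one vertex makes the statistic icv-larger (resp.\ pgf-larger) — then, since $\eta$ and $\eta^{(k)}$ differ at only finitely many vertices, transitivity gives $f(\eta,S)\icvprec f(\eta^{(k)},S)$ for every $k$ by telescoping one vertex at a time. The general case follows by letting $k\to\infty$, and this is the one step requiring care, since $\eta^{(k)}\to\eta'$ is not monotone: here I would invoke continuity of $f$ together with closure of the icv and pgf orders under limits, for which it is enough that $f(\eta^{(k)},S)\to f(\eta',S)$ in distribution, as the relevant test functions are bounded and continuous.

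\emph{Single vertex, icv case.} Fix the vertex $v_0$ whose law changes and condition on every path and on every count except at $v_0$. By the independence hypotheses, $N:=\eta(v_0)$ and $N':=\eta'(v_0)$ stay independent of this conditioning and satisfy $N\icvprec N'$. Write $Y_n$ for the value of $f$ when $v_0$ carries $n$ frogs (random through the i.i.d.\ paths $S_\proc(v_0,1),S_\proc(v_0,2),\ldots$) and set $\Phi(n)=\E[\phi(Y_n)]$ for a fixed bounded increasing concave $\phi$. Since $N$ is independent of the paths, $\E[\phi(f(\eta,S))]=\E[\Phi(N)]$ and likewise for $\eta'$, so it suffices to show $\Phi$ is increasing and concave and then invoke $N\icvprec N'$. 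Monotonicity is immediate from the $m=1$ case of \ref{main.condition} and monotonicity of $\phi$. For concavity, condition on the first $n$ paths and write $Y_{n+1}=Y_n+A$ and $Y_{n+2}=Y_n+A+B$, where $A=\Delta_{P_\proc^{n+1}}f$ and $B=\Delta_{P_\proc^{n+2}}f$ are evaluated at the relevant configurations; the $m=2$ inequality replaces $B$ by the increment $\widehat B$ of adding the $(n+2)$nd frog directly to the $n$-frog configuration, and exchangeability makes $\widehat B$ an independent copy of $A$. Using that $\phi$ is increasing to pass from $B$ to $\widehat B$, and then the two-point concavity $\phi(y+a+b)+\phi(y)\le\phi(y+a)+\phi(y+b)$ integrated over the independent pair $(A,\widehat B)$, yields $\Phi(n+2)+\Phi(n)\le 2\Phi(n+1)$. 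Conditions \ref{infinite.increasing} and \ref{infinite.concave} are exactly what keeps these steps valid when $Y_n=\infty$.

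\emph{Single vertex, pgf case, and the main obstacle.} The set-up is the same, but I take $\Phi(n)=\E[t^{Y_n}]$ for fixed $t\in(0,1)$ and aim to show $\Phi$ is completely monotone, $(-1)^k\Delta^k\Phi(n)\ge0$ for all $k$; by the Hausdorff moment theorem $\Phi(n)=\int_{[0,1]}s^n\,d\mu(s)$ for some nonnegative $\mu$, whence $N\pgfprec N'$ gives $\E[\Phi(N)]\ge\E[\Phi(N')]$, the desired conclusion. Introducing fresh i.i.d.\ paths $Q_\proc^1,\ldots,Q_\proc^k$ and using exchangeability to symmetrize (so that the contribution of adding any $j$ of them depends only on $j$), one gets $\Delta^k\Phi(n)=\E\bigl[\Delta_{Q_\proc^1}\cdots\Delta_{Q_\proc^k}(t^f)\bigr]$ at the $n$-frog configuration, so it suffices to prove the \emph{pathwise} statement $(-1)^k\Delta_{Q_\proc^1}\cdots\Delta_{Q_\proc^k}(t^f)\ge0$, given that $f$ obeys \ref{main.condition} for all $m$. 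I expect this to be the crux: it is the discrete analogue of the classical fact that $t^f=e^{-sf}$ is completely monotone when $f$ is a Bernstein function, which, as the remark after \thref{def:declining} notes, is what being a pgf statistic encodes. I would prove it by induction on $k$ using the discrete Leibniz rule $\Delta_{P_\proc}(gh)=g\,\Delta_{P_\proc}h+(\sigma_{P_\proc}h)\,\Delta_{P_\proc}g$: the first difference of $t^f$ factors into $t^f$ times a difference of $f$, both of controlled sign by \ref{main.condition}, and one checks that the completely monotone lattice functionals are closed under the products that arise. The case $k=2$ already reduces to the two-point inequality for the decreasing convex function $x\mapsto t^x$ under the constraints $a\le b,c\le d$ and $a+d\le b+c$ supplied by the $m=1,2$ conditions. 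The residual difficulties — the bookkeeping for infinite values through \ref{infinite.increasing}–\ref{infinite.concave}, and the passage to infinitely many vertices via continuity — are more routine but still need care.
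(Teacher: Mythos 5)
Your overall architecture --- reduce to changing the law at a single vertex, telescope over finitely many vertices, then pass to a limit --- is the same as the paper's (your single-vertex step is its \thref{lem:change.one}, and your icv computation, replacing the second increment $B$ by an i.i.d.\ copy $\widehat B$ via the $m=2$ inequality and then applying two-point concavity, is a correct, more hands-on version of what the paper deduces from its composition lemma). However, your limit step has a genuine gap. You interpolate with $\eta^{(k)}$ equal to $\eta'$ on $\{v_1,\ldots,v_k\}$ and to $\eta$ elsewhere, and you then need $f(\eta^{(k)},S)\to f(\eta',S)$ in distribution. Nothing in \thref{def:declining} supplies this: the continuity hypothesis covers only configurations increasing monotonically upward, realization by realization, and $\eta^{(k)}\to\eta'$ is not monotone. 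Worse, the only pathwise control the axioms give is monotonicity, which bounds the wrong side: deleting all frogs outside $G_k=\{v_1,\ldots,v_k\}$ gives $f(\eta^{(k)},S)\geq f(\eta'|_{G_k},S)$, hence $\liminf_k \E\varphi\bigl(f(\eta^{(k)},S)\bigr)\geq \E\varphi\bigl(f(\eta',S)\bigr)$, whereas you need an upper bound. The paper avoids this by truncating \emph{both} configurations: it telescopes between $\eta|_{G_k}$ and $\eta'|_{G_k}$ (which differ at finitely many vertices), and since $\eta|_{G_k}(v)\nearrow\eta(v)$ and $\eta'|_{G_k}(v)\nearrow\eta'(v)$ are both monotone limits, the continuity hypothesis and monotone convergence apply to each side separately. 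Your proof should be repaired in exactly this way.

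The second gap is the pgf crux, which you correctly isolate but do not prove: the pathwise statement $(-1)^k\Delta_{Q^1_\proc}\cdots\Delta_{Q^k_\proc}\bigl(t^{f}\bigr)\geq 0$ for a pgf statistic $f$. Your Leibniz-rule induction does settle $k=1,2$, but ``one checks that the completely monotone lattice functionals are closed under the products that arise'' is precisely the combinatorial content of the problem: after two differences the expression is a sum of products of terms $t^{f(\sigma_U(\eta,S))}$ and of differences of $f$ evaluated at varying configurations, and tracking the alternating signs of all mixed terms for general $k$ is the hard part. The paper's route (\thref{lem:composition}) is to extend $U\mapsto f(\sigma_U(\eta,S))$ to a multilinear polynomial on $[0,1]^n$, verify the sign conditions on partial derivatives at the corners, propagate them to the whole cube by multilinearity, and then apply the multivariate Fa\`a di Bruno formula, in which every partition's term has sign $(-1)^{n+1}$; an argument of this strength, or a fully worked-out induction with a stronger inductive hypothesis, is required. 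Two smaller points: your Hausdorff-representation step needs the boundary cases $s\in\{0,1\}$ of $\E s^{N}\geq \E s^{N'}$, which follow from the pgf order by monotone convergence (this bookkeeping is the content of the paper's \thref{prop:discrete.ineq}); and the infinite-value cases you defer to conditions~\ref{infinite.increasing} and~\ref{infinite.concave} do require the minimal-set argument the paper spells out, not just a citation of those conditions.
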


The intuition behind the proof is that the extra frogs woken by the addition of two
frogs at some vertex is the union of the frogs woken by the addition of each frog separately.
This subadditivity property meshes neatly with concavity---for instance, the expected number
of visits to the root will increase concavely as frogs are added at a vertex---and somehow
this makes the frog model interact well with stochastic orders defined in terms of concave functions.

\subsection*{Applications}

As we mentioned, our main statistic of interest fits the criteria of \thref{thm:comparison}.
\begin{prop}\thlabel{lem:returns}
  The count $r(\eta,S)$ of visits to $\emptyset$ in the frog model $(\eta,S)$ is a continuous icv
  and pgf statistic of the frog model.
\end{prop}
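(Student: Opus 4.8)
The plan is to collapse the dynamic frog model to a static reachability problem, after which every requirement of \thref{def:declining} follows from a single combinatorial identity. The key observation is that a frog's contribution to $r$ does not depend on \emph{when} it is activated. Each frog $g$ (the initial active frog at $\emptyset$, together with every dormant frog) follows a fixed path once woken, so the number $v(g)$ of times $g$ visits $\emptyset$ equals the number of times its path is at $\emptyset$, and this is invariant under shifting $g$'s activation time. Likewise, whether the path of one frog ever passes through the starting vertex of another is shift-invariant. I would exploit this by declaring a directed edge $h\to g$ whenever the path of $h$ ever passes through the starting vertex of $g$, and then checking, by propagating activations forward and tracing them backward, that $g$ is activated if and only if $g$ is reachable from the initial frog in this directed graph. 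Writing $\Aa=\Aa(\eta,S)$ for this reachable set, we obtain the clean decomposition $r(\eta,S)=\sum_{g\in\Aa}v(g)$ with all $v(g)\ge 0$.

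Now fix paths $P^1_\proc,\dots,P^m_\proc$ all starting at a common vertex $x$, let $p_i$ be the frog added by $\sigma_{P^i_\proc}$, write $\sigma_T$ for the operation adding the frogs $\{p_i:i\in T\}$, and let $\Aa_T$ be the reachable set of $\sigma_T(\eta,S)$. Expanding the iterated difference and using the decomposition above gives
\begin{align*}
 (-1)^m\Delta_{P^1_\proc}\cdots\Delta_{P^m_\proc} r(\eta,S)
   &= \sum_{T\subseteq[m]} (-1)^{\abs{T}} r(\sigma_T(\eta,S)) \\
   &= \sum_{g} v(g)\sum_{T\subseteq[m]} (-1)^{\abs{T}} \ind{g\in\Aa_T},
\end{align*}
valid whenever all terms are finite. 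The whole argument then rests on the claim that adding several frogs \emph{at the same vertex} wakes exactly the union of the frogs each one wakes alone:
\[
 \Aa_T = \Aa_{\emptyset}\cup\bigcup_{i\in T}\Aa_{\{i\}}\qquad\text{for every }T\subseteq[m].
\]

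Granting the claim, the coefficients are immediate. If $g\in\Aa_\emptyset$ then $\ind{g\in\Aa_T}=1$ for all $T$, so the inner sum is $\sum_T(-1)^{\abs{T}}=0$. Otherwise, with $I=\{i:g\in\Aa_{\{i\}}\}$ and $I^{c}=[m]\setminus I$, the claim gives $g\in\Aa_T\iff T\cap I\neq\emptyset$, so that
\[
 \sum_{T\subseteq[m]} (-1)^{\abs{T}} \ind{g\in\Aa_T}
   = -\sum_{T\subseteq I^{c}}(-1)^{\abs{T}}\in\{0,-1\},
\]
always $\le 0$. Since $v(g)\ge 0$, this proves condition~\ref{main.condition} for all $m\ge 1$, covering the icv case ($m=1,2$) and the pgf case simultaneously. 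The same claim yields condition~\ref{infinite.increasing}, which is just the monotonicity $\Aa_\emptyset\subseteq\Aa_{\{1\}}$, and condition~\ref{infinite.concave}, since $\Aa_{\{1,2\}}=\Aa_{\{1\}}\cup\Aa_{\{2\}}$ forces $r(\sigma_{\{1,2\}}(\eta,S))\le r(\sigma_{\{1\}}(\eta,S))+r(\sigma_{\{2\}}(\eta,S))$. To prove the claim itself, the inclusion $\supseteq$ is monotonicity of reachability. For $\subseteq$, take a simple reachable walk from the root frog to $g$ in $\sigma_T(\eta,S)$; if it avoids all added frogs then $g\in\Aa_\emptyset$, and otherwise I would splice it by letting $p_k$ be the first added frog on the walk and $p_j$ the last. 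Its predecessor $h$ is a base frog whose path meets $x$, and since every added frog starts at $x$, the edge $h\to p_j$ also exists; concatenating the initial segment up to $h$, the edge $h\to p_j$, and the final segment from $p_j$ to $g$ (which by simplicity uses only base frogs and $p_j$) produces a walk in the graph of $\sigma_{\{j\}}(\eta,S)$, so $g\in\Aa_{\{j\}}$. Finally, continuity follows because $\eta_k(v)\nearrow\eta(v)$ makes the frog set, hence the directed graph and its reachable set, increase to that of $(\eta,S)$; every activation is witnessed by a finite walk, so each $g\in\Aa$ enters $\Aa_k$ for large $k$, and monotone convergence gives $r(\eta_k,S)\nearrow r(\eta,S)$.

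The main obstacle is the timing-independence underlying the reduction: one must verify carefully that both ``a path ever visits a vertex'' and the per-frog count $v(g)$ are unaffected by shifting activation times, which is exactly what turns the genuinely dynamic model into static reachability. Once that is secured, the only delicate point is the splice in the claim, where the essential use of the common starting vertex $x$ is precisely what lets the first-frog predecessor $h$ reach the last added frog $p_j$, and what rules out the cooperation between two distinct added frogs that would otherwise break concavity.
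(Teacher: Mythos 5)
Your proof is correct, but it is organized quite differently from the paper's. You collapse the dynamics to static reachability once and for all, decompose $r(\eta,S)=\sum_{g\in\Aa}v(g)$ over individual frogs, and derive every requirement of \thref{def:declining} (condition~\ref{main.condition} for all $m\geq 1$, the two infinity conditions, and continuity) from the single wake-set identity $\Aa_T=\Aa_\emptyset\cup\bigcup_{i\in T}\Aa_{\{i\}}$, proved by splicing simple walks through the common starting vertex. The paper instead proceeds modularly: it decomposes $r=\sum_u r_u$ by the vertex $u$ where the returning frogs originate; for frogs added at a vertex $v\neq u$ it factors $r_u(\sigma_U(\eta,S))=N\,\viu{\infty}{u}(\sigma_U(\eta,S))$ and invokes \thref{lem:visited} (itself resting on the max/union identity of \thref{lem:stat.builder}); for $v=u$ it invokes the additivity lemma \thref{lem:stat.builder.sum}; and it assembles the pieces with \thref{lem:linearity}. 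The combinatorial engine is the same in both arguments---adding several frogs at one vertex wakes exactly the union of the sets each wakes alone, and the resulting alternating sums of indicators lie in $\{0,-1\}$, exactly as in the computation $(-1)^{n+1}\ind{b_1=\cdots=b_n=1}$ inside \thref{lem:stat.builder}---but the decompositions differ genuinely. The paper's route buys reusable lemmas that it needs elsewhere (\thref{lem:visited} powers \thref{cor:Zd.shape} and \thref{cor:death}), while yours buys a self-contained, uniform treatment in which the icv and pgf cases, the infinite-value conditions, and continuity all fall out of one decomposition, at the cost of having to verify explicitly the timing-to-reachability reduction that the paper uses only implicitly (e.g.\ in the sentence of the \thref{lem:visited} proof about chains of activations); your splice through the predecessor $h$ of the first added frog is the precise analogue of the paper's trick of cutting activation chains so that no two frogs originate at the same vertex.
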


This allows us to transfer many recurrence and transience results to different initial conditions.
%
%
In the increasing concave order, the constant~$k$ dominates all mean~$k$ random variables.
\thref{thm:comparison}\ref{item:comparison.icv} and \thref{lem:returns} therefore imply the following:
\begin{cor} \thlabel{cor:det}
  Consider the frog model on a graph with mutually independent frog paths and i.i.d.\ frogs per site
  with common mean $\mu$. If this is almost surely recurrent, then for any integer 
  $k\geq\mu$, the same frog model with $k$ frogs per site is almost surely recurrent.
\end{cor}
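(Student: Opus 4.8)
The plan is to deduce Corollary~\ref{cor:det} directly from \thref{thm:comparison}\ref{item:comparison.icv} and \thref{lem:returns}, using a single elementary fact about the increasing concave order. The key observation is that if $Y$ is any random variable with mean $\mu$ taking values in $[0,\infty)$, and $k \geq \mu$ is an integer (viewed as the constant random variable equal to $k$), then $Y \icvprec k$. First I would verify this: for any bounded increasing concave $f\colon[0,\infty)\to\RR$, Jensen's inequality gives $\E f(Y) \leq f(\E Y) = f(\mu) \leq f(k)$, where the second inequality uses that $f$ is increasing and $\mu \leq k$. Since $f(k)$ is the value of $f$ at the constant $k$, this is exactly $\E f(Y) \leq \E f(k)$, i.e.\ $Y \icvprec k$.

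\medskip

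With this in hand, let $\eta$ denote the i.i.d.\ initial configuration with common mean $\mu$, and let $\eta'$ denote the deterministic configuration with $\eta'(v)=k$ at every nonroot vertex. By the preceding paragraph, $\eta(v) \icvprec \eta'(v)$ for every $v$. The hypotheses of \thref{thm:comparison} are met: the frog paths are mutually independent and (at each vertex) identically distributed, and the counts are independent of the paths and of each other. By \thref{lem:returns}, the return count $r(\eta,S)$ is a continuous icv statistic. Applying \thref{thm:comparison}\ref{item:comparison.icv} therefore yields $r(\eta,S) \icvprec r(\eta',S)$.

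\medskip

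It remains to translate the conclusion $r(\eta,S)\icvprec r(\eta',S)$ into the recurrence statement. Here I would use the function $f_t(x) = 1 - t^x$ for a fixed $t \in (0,1)$, which the excerpt already notes is bounded, increasing, and concave, with $f_t(\infty) = 1$. Recurrence of $(\eta,S)$ means $r(\eta,S)=\infty$ almost surely, so $\E f_t\bigl(r(\eta,S)\bigr) = 1$. The icv domination then forces $\E f_t\bigl(r(\eta',S)\bigr) \geq 1$, and since $f_t \leq 1$ pointwise this gives $f_t\bigl(r(\eta',S)\bigr) = 1$ almost surely, i.e.\ $r(\eta',S) = \infty$ almost surely. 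Thus the deterministic model with $k$ frogs per site is recurrent, as claimed.

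\medskip

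I do not expect a serious obstacle, since the corollary is essentially a direct instantiation of the main theorem; the only substantive content beyond quoting \thref{thm:comparison} and \thref{lem:returns} is the Jensen argument establishing $Y \icvprec k$ for $k \geq \E Y$. The one point requiring mild care is the handling of infinite values: I should confirm that the test function $f_t = 1-t^x$ lies in the class used to define the icv order (with the stated convention $t^\infty = 0$, so $f_t(\infty)=1$), so that the domination $r(\eta,S)\icvprec r(\eta',S)$ can legitimately be applied to $f_t$ to extract the almost-sure recurrence conclusion.
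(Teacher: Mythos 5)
Your proposal is correct and follows essentially the same route as the paper: the paper's proof cites \thref{thm:comparison}, \thref{lem:returns}, \thref{prop:maximals}\ref{part:Jensen} (which it likewise proves by Jensen's inequality), and the Section~\ref{sec:orders} observation that $X\icvprec Y$ implies $\P[X=\infty]\leq\P[Y=\infty]$ (which it likewise obtains from test functions of the form $1-t^x$). The only cosmetic difference is that you re-derive those two cited facts inline---using a single fixed $t\in(0,1)$ rather than the limit $t\nearrow 1$, which suffices since you only need the special case $\P[X=\infty]=1$---so there is nothing substantive to distinguish the two arguments.
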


This solves our problem of showing that the frog model on a $d$-ary tree with
deterministically $k$ frogs per site is recurrent for large enough $k$.
In more detail, \cite[Theorem~1]{HJJ2} establishes that on the $d$-ary tree
with i.i.d.-$\Poi(\mu)$ frogs per site, there is a critical value $\mu_c(d)$ such that the frog
model is recurrent~a.s.\ if $\mu>\mu_c(d)$ and transient~a.s.\ if $\mu<\mu_c(d)$. 
\thref{cor:det}, together with the estimates on $\mu_c(d)$ from \cite{JJ3_log}, give us the following result:
\begin{cor}\thlabel{cor:treek}
    For any $d\geq 2$, the frog model on $\mathbb T_d$ with $k$ frogs per site is almost surely
    recurrent for large enough $k$.
    For large enough $d$, the model is almost surely
    transient if $k<.24d$ and almost surely recurrent if $k>2.28d$.
\end{cor}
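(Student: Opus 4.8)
The plan is to split \thref{cor:treek} into its recurrence and transience halves and treat them by entirely different mechanisms. The recurrence assertions will follow from \thref{cor:det} by transferring from Poisson to deterministic initial conditions, whereas the transience assertion cannot use \thref{thm:comparison} at all and must be imported directly from \cite{JJ3_log}.

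For the recurrence statements I would first recall from \cite[Theorem~1]{HJJ2} that $\mu_c(d)\in(0,\infty)$ for every $d\geq 2$, so that the frog model on $\mathbb T_d$ with i.i.d.-$\Poi(\mu)$ frogs per site is almost surely recurrent whenever $\mu>\mu_c(d)$. Fix an integer $k>\mu_c(d)$ and apply this with $\mu=k$: the $\Poi(k)$ model has mean $k$ and is almost surely recurrent. Since the constant $k$ dominates every mean-$k$ random variable in the icv order, \thref{cor:det} applies with $\mu=k$ (note $k\geq\mu$) and shows that the deterministic $k$-frogs-per-site model is almost surely recurrent. Because $\mu_c(d)<\infty$, this already gives the first assertion: for each fixed $d\geq 2$ the deterministic model is recurrent for every integer $k>\mu_c(d)$. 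For the quantitative recurrence claim I would then feed in the upper estimate $\mu_c(d)<2.28\,d$, valid for all large $d$, from \cite{JJ3_log}; any integer $k>2.28\,d$ then satisfies $k>\mu_c(d)$, and the same argument yields recurrence.

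The transience claim is the genuine obstacle, and the point to stress is that \thref{thm:comparison} is of no help. To deduce transience of the deterministic $k$-model from transience of a Poisson model one would need a \emph{transient} configuration $\eta'$ with $k=\eta(v)\icvprec\eta'(v)$, that is, a transient model lying above the constant $k$ in the icv order. But testing against the bounded increasing concave function $f(x)=\min(x,k)$ gives $k=f(k)\leq\E f(\eta')=\E\min(\eta',k)\leq k$, forcing $\eta'\geq k$ almost surely. Such a configuration has at least as many frogs at every site as the deterministic $k$-model, hence is at least as recurrent by a monotone coupling; so every configuration lying above the constant $k$ in the icv order is itself \emph{harder} to prove transient, and the comparison theorem provides no new route to transience. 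In short, the constant is icv-maximal among variables of its mean, which is exactly why the machinery of this paper transfers recurrence but never transience.

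I would therefore obtain the regime $k<0.24\,d$ directly from the configuration-specific transience analysis of \cite{JJ3_log}, whose argument fixes the initial configuration and so applies verbatim to the deterministic $k$-frogs-per-site model; combining its transient regime with the two recurrence arguments above completes the proof. The main difficulty is thus conceptual rather than computational: recognizing that the two halves of the statement are not symmetric, that only the recurrence side is amenable to the stochastic-order comparison, and that the transient side must be supplied by the external estimates of \cite{JJ3_log}.
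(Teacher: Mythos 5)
Your recurrence half is exactly the paper's argument: the paper's proof of \thref{cor:treek} reads ``The recurrence part follows from \thref{cor:det}, \cite[Theorem~1]{HJJ2}, and \cite[Theorem~1]{JJ3_log},'' which is precisely your combination of $\mu_c(d)<\infty$ for every $d\geq 2$, the estimate $\mu_c(d)\leq 2.28d$ for large $d$, and the Poisson-to-deterministic transfer supplied by \thref{cor:det}. Your structural observation is also correct and matches the paper's logic: since the constant $k$ is icv-maximal among mean-$k$ variables (your test-function argument with $\min(x,k)$ forcing $\eta'\geq k$ a.s.\ is right), \thref{thm:comparison} can only ever push recurrence from random to deterministic configurations, never transience, so the transience claim must be imported from outside the comparison framework. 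The paper indeed handles it by a separate citation.

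The gap is in how you discharge that transience claim. You propose to quote the transience analysis of \cite{JJ3_log} and assert that it ``fixes the initial configuration and so applies verbatim'' to $k$ frogs per site. That assertion is doing all the work, and it is not something you may take for granted: Theorem~1 of \cite{JJ3_log}, as invoked in this paper, is a statement about the Poisson critical density $\mu_c(d)$, and by your own icv-maximality argument transience for deterministic $k$ is \emph{strictly stronger} than transience for $\Poi(k)$ frogs per site --- it cannot be extracted from the Poisson statement as a black box, and whether the \emph{proof} (as opposed to the statement) survives the change of initial condition is exactly the kind of claim that needs verification, since arguments in this literature routinely exploit Poisson thinning. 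The paper locates the needed deterministic-configuration result elsewhere: its proof cites \cite[Proposition~15]{HJJ2} for precisely this step, not \cite{JJ3_log}. So your proposal is complete only modulo an unverified claim about the internals of \cite{JJ3_log}; to close it you should either invoke \cite[Proposition~15]{HJJ2} as the paper does, or actually check that the transience argument of \cite{JJ3_log} tolerates deterministic initial conditions.
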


Another application of \thref{thm:comparison} 
concerns the transience regime of the $d$-ary tree. In \cite[Theorem~1]{HJJ1} we show 
that on $\mathbb T_d$ with one frog per site and simple random walk paths, the frog model is transient
for $d\geq 5$. 
An immediate corollary of \thref{thm:comparison} is transience for all other mean~1 configurations.

\begin{cor}\thlabel{cor:trans}
	For $d \geq 5$, the frog model on $\mathbb T_d$ with $\eta(v)$ frogs at each site, where $\E \eta(v) \leq 1$ for all $v \in \mathbb T_d$, is almost surely transient. 
\end{cor}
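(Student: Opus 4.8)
The plan is to deduce the corollary from \thref{thm:comparison}\ref{item:comparison.icv} and \thref{lem:returns}, comparing the given configuration against the deterministic one with a single frog at every site, for which transience when $d\geq 5$ is exactly \cite[Theorem~1]{HJJ1}. Write $\1$ for the configuration with $\eta'(v)=1$ at each $v$.

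First I would record the elementary fact that in the increasing concave order a constant dominates every random variable whose mean is at most that constant. Indeed, if $\E\eta(v)\leq 1$ then for every bounded increasing concave $f\colon[0,\infty)\to\RR$ Jensen's inequality and monotonicity give $\E f(\eta(v))\leq f(\E\eta(v))\leq f(1)$, so that $\eta(v)\icvprec 1$ for every $v$. Since $r(\eta,S)$ is a continuous icv statistic by \thref{lem:returns}, and the mutually independent frog counts together with the i.i.d.\ simple random walk paths satisfy the hypotheses of \thref{thm:comparison} (with the deterministic configuration $\1$, which is independent of everything, in the role of $\eta'$), part~\ref{item:comparison.icv} of that theorem yields
\begin{align*}
  r(\eta,S)\icvprec r(\1,S).
\end{align*}

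The remaining, and only nonroutine, step is to pass from this ordering to the comparison of the probabilities of recurrence,
\begin{align*}
  \P\bigl(r(\eta,S)=\infty\bigr)\leq \P\bigl(r(\1,S)=\infty\bigr).
\end{align*}
Care is needed here because the indicator $x\mapsto\ind{x=\infty}$ is not admissible under the convention $f(\infty)=\lim_{x\to\infty}f(x)$. Instead I would test the icv ordering against the functions $f_n(x)=\min(x/n,1)$, each bounded, increasing, and concave with $f_n(\infty)=1$; since $f_n\to\ind{x=\infty}$ pointwise and $0\leq f_n\leq 1$, dominated convergence gives $\E f_n(r(\eta,S))\to\P(r(\eta,S)=\infty)$ and likewise for $\1$, so applying the displayed inequality to $f_n$ and letting $n\to\infty$ produces the claimed bound. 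Finally, $d\geq 5$ gives $\P(r(\1,S)=\infty)=0$ by \cite[Theorem~1]{HJJ1}, whence $\P(r(\eta,S)=\infty)=0$ and the model with configuration $\eta$ is almost surely transient.
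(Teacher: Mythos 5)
Your proposal is correct and follows essentially the same route as the paper: your Jensen step is exactly \thref{prop:maximals}\ref{part:Jensen}, the comparison step is the paper's application of \thref{thm:comparison}\ref{item:comparison.icv} with \thref{lem:returns}, and your limit argument with the test functions $\min(x/n,1)$ is just a self-contained proof of the observation in Section~\ref{sec:orders} that $X\icvprec Y$ implies $\P[X=\infty]\leq\P[Y=\infty]$ (which the paper obtains instead by letting $t\nearrow 1$ in the pgf test functions). The only differences are that you reprove these two cited facts rather than invoke them, which is harmless.
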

Our next application is to the frog model on $\mathbb{Z}^d$.
As mentioned earlier, \cite[Theorem~1.1]{random_recurrence} establishes the existence of a critical parameter
$0<\alpha_c(d)<\infty$ for the frog model with simple random walk paths on $\mathbb{Z}^d$
and initial configuration given by $\eta(x)\sim\Ber(p_x)$ such that
  \begin{enumerate}[label = (\roman*)]
    \item 
      if $p_x\leq\alpha/\norm{x}^2$ for $\alpha<\alpha_c(d)$ 
      and all sufficiently large $x$, then the model is transient with positive probability;
    \item
      if $p_x\geq\alpha/\norm{x}^2$ for $\alpha>\alpha_c(d)$ and all sufficiently large $x$, 
      then the model is transient with probability zero.
  \end{enumerate}
\thref{thm:comparison} allows us to extend part~(i) of this result to non-Bernoulli
distributions of sleeping frogs. Other results like \cite[Theorem~1.3]{random_recurrence} can be similarly
extended.

\begin{cor}\thlabel{cor:Zd.transience}
For all $\alpha < \alpha_c(d)$ and any $\bigl(\eta(x),\,x\in\mathbb Z^d\setminus\{0\}\bigr)$
satisfying $\E \eta(x)\leq \alpha/\norm{x}^2$ for sufficiently large $x$, 
the frog model on $\mathbb{Z}^d$ with simple random walk paths and initial 
configuration $\eta$ is transient with positive probability.
\end{cor}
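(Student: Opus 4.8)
The plan is to combine the extension result of \cite{random_recurrence} with the comparison theorem. The key observation is that the conclusion ``transient with positive probability'' can be reformulated in terms of a pgf statistic. Specifically, by \thref{lem:returns}, the count $r(\eta,S)$ of visits to the root is a continuous icv and pgf statistic, and transience with positive probability is precisely the statement $\P(r(\eta,S)<\infty)>0$. Since for any $t\in(0,1)$ we have $\E t^{r(\eta,S)}>0$ if and only if $r(\eta,S)<\infty$ with positive probability (recall $t^\infty$ is interpreted as $0$), the quantity $\E t^{r(\eta,S)}$ detects exactly this event. This is what lets the pgf order do its work.

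First I would record that for each $x$, the hypothesis $\E\eta(x)\leq \alpha/\norm{x}^2$ lets me compare $\eta(x)$ to a Bernoulli variable. The crucial fact is that among all nonnegative integer-valued random variables with a given mean $p\leq 1$, the Bernoulli variable $\Ber(p)$ is the largest in the pgf order: for any $X$ with $\E X = p$ and any $t\in(0,1)$, Jensen's inequality applied to the convex function $x\mapsto t^x$ gives $\E t^{X}\geq t^{\E X} = t^{p}\geq 1-p(1-t) = \E t^{\Ber(p)}$, where the last inequality is $t^p \geq 1-p(1-t)$, itself a consequence of convexity of $t\mapsto t^p$ (or of Bernoulli's inequality). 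Hence $\eta(x)\pgfprec \Ber(p_x)$ with $p_x = \E\eta(x)\leq \alpha/\norm{x}^2$. I would also choose the Bernoulli parameters so they still satisfy the density bound of \cite[Theorem~1.1]{random_recurrence}(i); since $p_x\leq \alpha/\norm{x}^2$ for large $x$ with $\alpha<\alpha_c(d)$, the Bernoulli configuration $\eta'(x)\sim\Ber(p_x)$ is transient with positive probability by that theorem.

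Next I would apply \thref{thm:comparison}\ref{item:comparison.pgf}. The hypotheses are met: the frog paths are independent simple random walks, identically distributed at each vertex, and independent of the counts; the statistic $r(\eta,S)$ is a continuous pgf statistic by \thref{lem:returns}; and $\eta(x)\pgfprec\eta'(x)$ for all $x$ as just shown. The theorem therefore gives $r(\eta,S)\pgfprec r(\eta',S)$, i.e. $\E t^{r(\eta,S)}\geq \E t^{r(\eta',S)}$ for all $t\in(0,1)$. Since the Bernoulli model $\eta'$ is transient with positive probability, $\E t^{r(\eta',S)}>0$, and hence $\E t^{r(\eta,S)}>0$, which forces $\P\bigl(r(\eta,S)<\infty\bigr)>0$. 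That is exactly transience with positive probability for the configuration $\eta$.

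The main obstacle I anticipate is bookkeeping at the two ends of the comparison rather than any deep difficulty. On the input side I must verify carefully that every $\eta(x)$ is dominated in the pgf order by a Bernoulli variable whose parameters still fall under the regime of \cite[Theorem~1.1]{random_recurrence}(i) --- in particular handling the finitely many small $x$ where the density bound need not hold, which I expect to absorb harmlessly since altering finitely many vertices cannot change whether the model is transient with positive probability. On the output side the only subtle point is the translation between the pgf statement $\E t^{r}>0$ and the event $\{r<\infty\}$ having positive probability; this hinges on the convention $t^\infty = 0$ and on $r$ being $[0,\infty]$-valued, both of which are built into the definitions, so it should be routine once stated cleanly.
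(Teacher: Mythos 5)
Your overall architecture matches the paper's: dominate each $\eta(x)$ by a Bernoulli variable, feed this into \thref{thm:comparison} with the statistic $r$ from \thref{lem:returns}, and transfer ``transient with positive probability'' from the Bernoulli model covered by \cite[Theorem~1.1]{random_recurrence}. Your output-side bookkeeping is also fine: pgf domination does give $\P[X=\infty]\leq\P[Y=\infty]$, exactly the observation the paper makes in Section~\ref{sec:orders}. But the input-side domination---the one substantive step---is proved incorrectly, and your proof of it cannot be repaired in the form you give. You claim $t^p\geq 1-p(1-t)$ for $p\in[0,1]$ and $t\in(0,1)$; this inequality is backwards. By convexity of $p\mapsto t^p$ (equivalently, Bernoulli's inequality with exponent in $[0,1]$), $t^p=t^{(1-p)\cdot 0+p\cdot 1}\leq (1-p)+pt=1-p(1-t)$, with strict inequality for $p\in(0,1)$. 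So Jensen gives $\E t^{\eta(x)}\geq t^{\E\eta(x)}$, but that lower bound sits \emph{below} $\E t^{\Ber(p)}$, and the chain breaks. The failure is not cosmetic: any argument like yours that uses only $\E\eta(x)\leq p$ and never the integrality of $\eta(x)$ must fail, because the claim is false for real-valued variables---the constant $p$ satisfies $t^p< 1-p(1-t)$, i.e.\ the constant dominates the Bernoulli in the pgf order (consistent with \thref{prop:maximals}\ref{part:Jensen}), not the other way around.

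The correct statement is exactly \thref{prop:maximals}\ref{part:Bernoulli}, which is what the paper's proof invokes: for \emph{integer-valued} $X$ with $\E X\in[k,k+1]$ and $Y$ supported on $\{k,k+1\}$ with $\E Y\geq \E X$, one has $X\icvprec Y$, hence also $X\pgfprec Y$. Integrality is essential; for instance, for $k=0$ one can note that $h(x)=t^x+(1-t)x$ satisfies $h(0)=h(1)=1$ and is convex, so $h(x)\geq 1$ at every nonnegative \emph{integer} $x$, whence $\E t^X\geq 1-(1-t)\E X\geq 1-(1-t)p=\E t^{\Ber(p)}$. Once you replace your Jensen step by this (or simply cite \thref{prop:maximals}\ref{part:Bernoulli}), the rest of your argument goes through. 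Note also that the general-$k$ form of that proposition is what handles the finitely many vertices where $\E\eta(x)$ may exceed $1$, the point you flagged but left vague: there you dominate $\eta(x)$ by a $\{k,k+1\}$-valued count rather than by a Bernoulli. With these repairs your proof coincides with the paper's, up to your use of the pgf order and generating functions where the paper uses the icv order and the tail observation $\P[X=\infty]\leq\P[Y=\infty]$.
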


A fundamental result for the frog model on $\mathbb Z^d$ is that it has a limiting shape,
in the following sense. Let $\xi_n$ be
the set consisting of all lattice squares $x + (-1/2,1/2]^d$ such that $x\in\ZZ^d$ has 
been visited by time~$n$ in a frog model with i.i.d.-$\pi$ frogs per vertex.  
Theorem~1.1 from \cite{random_shape} establishes that for any dimension $d \geq 1$, there is a convex set $\mathbf{A} \subseteq \mathbb R^d$ depending on the distribution $\pi$
such that for any $0<\epsilon <1$,
$$\P\Bigl[(1-\epsilon)\mathbf{A}\subseteq \frac{ \xi_n}{n} \subseteq (1+\epsilon )\mathbf{A}
   \text{ for all sufficiently large $n$}\Bigr]=1.$$
Similar results were proven earlier for the discrete- and continuous-time model with one frog per site
in \cite{shape} and \cite{combustion}, respectively. 
We show that the limiting shape, $\mathbf{A}$, respects the icv and pgf orders. This mirrors the inequalities for the time constant for first passage percolation that are proven in \cite{BK}.

\begin{cor} \thlabel{cor:Zd.shape}
Let $\mathbf{A}$ and $\mathbf{A}'$ be the limiting shapes in the above sense for a frog model on $\mathbb Z^d$ with i.i.d.-$\pi$ and i.i.d.-$\pi'$ particles at each site, respectively.  If $\pi \pgfprec \pi'$, then $\mathbf{A} \subseteq \mathbf{A}'$.
\end{cor}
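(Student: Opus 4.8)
The plan is to reduce the set containment $\mathbf A \subseteq \mathbf A'$ to a comparison of visit probabilities supplied by \thref{thm:comparison}. For a vertex $v \in \mathbb Z^d$ and a time $n$, introduce the statistic
\[
  g_{v,n}(\eta,S) = \ind{v \text{ is visited by time } n \text{ in } (\eta,S)}.
\]
First I would show, exactly as in \thref{lem:returns}, that $g_{v,n}$ is a continuous pgf statistic. It is $\{0,1\}$-valued, so conditions \ref{infinite.increasing}--\ref{infinite.concave} are vacuous; it is increasing in $\eta$ (adding a frog can only speed up a visit); and the alternating inequalities of \ref{main.condition} hold for all $m$ because the set of frogs woken within time $n$ by adding source frogs at a common vertex is the union of the sets woken by adding each one separately (the property highlighted in the introduction), so as a function of which sources are present the indicator has the complete monotonicity of $B \mapsto \ind{v \in \text{base} \cup \bigcup_{i\in B} W_i}$. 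Continuity is immediate, since only the finitely many frogs appearing in a length-$n$ activation chain affect $g_{v,n}$. Applying \thref{thm:comparison}\ref{item:comparison.pgf} with $\eta(v)\sim\pi$ and $\eta'(v)\sim\pi'$ then gives $g_{v,n}(\eta,S)\pgfprec g_{v,n}(\eta',S)$ for every $v$ and $n$. Since for $\{0,1\}$-valued variables $X\pgfprec Y$ is equivalent to $\P(X=1)\le\P(Y=1)$, this reads
\[
  \P_\pi(v \text{ visited by time } n) \le \P_{\pi'}(v \text{ visited by time } n)\qquad\text{for all } v, n.
\]

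Next I would feed this into the shape theorem. Fix $z$ in the interior of $\mathbf A$, and pick $\eps$ small enough that $z/(1-\eps)\in\mathbf A$. Then the inclusion $(1-\eps)\mathbf A \subseteq \xi_n/n$ holds a.s.\ for all large $n$ in the $\pi$-model, so writing $x_n$ for the lattice point whose unit square contains $nz$, the event $\{x_n \text{ visited by time } n\}$ holds a.s.\ eventually and hence $\P_\pi(x_n \text{ visited by time } n)\to 1$. By the displayed inequality, $\P_{\pi'}(x_n \text{ visited by time } n)\to 1$ as well. Suppose toward a contradiction that $z\notin\mathbf A'$; using that $\mathbf A'$ is closed, choose $\eps'$ with $z/(1+\eps')\notin\mathbf A'$. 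The upper inclusion $\xi'_n/n \subseteq (1+\eps')\mathbf A'$ for the $\pi'$-model then forces $\{x_n \text{ visited by time } n\}$ to fail a.s.\ eventually, so $\P_{\pi'}(x_n \text{ visited by time } n)\to 0$, contradicting the limit just obtained. Hence $z\in\mathbf A'$, and therefore $\operatorname{int}\mathbf A \subseteq \mathbf A'$.

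Finally, since $\mathbf A$ is convex and $\mathbf A'$ is closed, $\mathbf A \subseteq \overline{\operatorname{int}\mathbf A} \subseteq \overline{\mathbf A'} = \mathbf A'$ whenever $\operatorname{int}\mathbf A\neq\emptyset$. If instead $\operatorname{int}\mathbf A=\emptyset$, then by the lattice symmetry of the model $\mathbf A=\{0\}$ (the only symmetric convex set with empty interior is a point), and $0\in\mathbf A'$ always, so again $\mathbf A\subseteq\mathbf A'$.

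The main obstacle is the first step: verifying that $g_{v,n}$ satisfies \ref{main.condition} for all $m$, i.e.\ that the visit indicators are genuine pgf statistics. Once that probability comparison is in hand the shape manipulation is routine; the only care needed there is the passage between the sets and their $\eps$-dilations and the closedness of the limiting shapes.
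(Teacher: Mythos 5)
Your proposal is correct, and its first half coincides with the paper's: the paper also begins by invoking \thref{lem:visited} (your statistic $g_{v,n}$ is exactly the indicator $\viu{n}{v}$ proved there to be a continuous pgf statistic, so the ``main obstacle'' you flag is already done in the paper) and \thref{thm:comparison}\ref{item:comparison.pgf} to conclude $\E \viu{t}{v}\leq\E\viu{t}{v}'$ for all $v$ and $t$. Where you diverge is in how this probability comparison is converted into the inclusion $\mathbf A\subseteq\mathbf A'$. The paper opens up the proof of the shape theorem in \cite{random_shape}: it writes $T(v)=\sum_{t\ge 0}(1-\viu{t}{v})$, deduces $\E T'(v)\leq \E T(v)$ by Fubini, and then uses the internal representation of the time constant from Kingman's subadditive ergodic theorem, $\mu(v)=\inf_{n\geq 1}\E T(nv)/n$ together with $\mathbf A=\{x\colon\mu(x)\leq 1\}$, so that $\mu'\leq\mu$ and hence $\mathbf A\subseteq\mathbf A'$ in one line. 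You instead treat the shape theorem as a black box: for $z\in\operatorname{int}\mathbf A$ the almost-sure lower inclusion forces $\P_\pi(x_n\text{ visited by time }n)\to 1$, the pgf comparison transfers this to $\pi'$, and the upper inclusion for $\pi'$ would force the same probabilities to tend to $0$ if $z\notin\mathbf A'$; then convexity and closedness upgrade $\operatorname{int}\mathbf A\subseteq\mathbf A'$ to $\mathbf A\subseteq\mathbf A'$. Your route buys independence from the internals of \cite{random_shape} (no Kingman representation, no integrability of $T$), at the cost of needing regularity of the limit shapes --- that $\mathbf A'$ is closed, and that $\mathbf A$ either has nonempty interior or is $\{0\}$ by hyperoctahedral symmetry --- facts which are true (and indeed immediate from the $\mu$-representation the paper uses) but which your argument must import from the literature rather than derive. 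Both arguments are sound; the paper's is shorter once one is willing to quote the structure of the shape theorem's proof, while yours is more robust to how that theorem is stated.
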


We also find applications to the frog model
with death, explored in \cite{phasetree,monotonic,po2}, where frogs have an independent chance $1-p$ of 
dying at each step. This is a frog model according to our general definition, taking the
frog paths to be stopped random walks. In this setting, the statistic of interest has been
the total number of sites visited, which undergoes a phase transition on the regular tree
from being finite a.s.\ to being
infinite with positive probability as $p$ grows. 
The model is said to \emph{die out} in the first case and to \emph{survive} in the second.
The number of sites visited is an icv and pgf statistic, as we show in \thref{lem:visited}, and we therefore obtain
the following result.

\begin{cor}\thlabel{cor:death}
  Let $\eta'(v) \pgfsucc \eta(v)$ be independent random variables indexed by the vertices $v$ of an arbitrary graph $G$. If the frog model with death on $G$ survives with $\eta(v)$ frogs at each $v$, then it survives with $\eta'(v)$ frogs at each $v$.
\end{cor}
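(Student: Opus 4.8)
The plan is to recognize that ``survival'' is exactly a statement about the event $\{V(\eta,S)=\infty\}$, where $V(\eta,S)$ denotes the total number of sites visited, and then to feed this into \thref{thm:comparison}\ref{item:comparison.pgf} using \thref{lem:visited}. Recall that the frog model with death survives precisely when $\P[V(\eta,S)=\infty]>0$. Note also that the hypotheses of \thref{thm:comparison} are met in this setting: the frog paths are independent stopped random walks, those emanating from a common vertex are identically distributed, and the paths are independent of the counts $\eta$ and $\eta'$.

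First I would invoke \thref{lem:visited}, which asserts that $V$ is a continuous pgf statistic. Combined with the hypothesis $\eta(v)\pgfprec\eta'(v)$ (which is just $\eta'(v)\pgfsucc\eta(v)$ rewritten) for every vertex $v$, part~\ref{item:comparison.pgf} of \thref{thm:comparison} yields $V(\eta,S)\pgfprec V(\eta',S)$. Unwinding the definition of the pgf order, this means
\begin{align*}
  \E t^{V(\eta,S)} \;\geq\; \E t^{V(\eta',S)} \qquad \text{for all } t\in(0,1),
\end{align*}
with the convention $t^{\infty}=0$.

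The final step is to pass to the limit $t\to 1^-$ to convert this into a comparison of survival probabilities. For any $[0,\infty]$-valued $X$, the convention $t^{\infty}=0$ gives $\E t^{X}=\E\bigl[t^{X}\ind{X<\infty}\bigr]$, and as $t\nearrow 1$ the integrand increases to $\ind{X<\infty}$, so monotone convergence yields $\lim_{t\to 1^-}\E t^{X}=\P[X<\infty]$. Applying this to both sides of the displayed inequality gives
\begin{align*}
  \P\bigl[V(\eta,S)<\infty\bigr] \;\geq\; \P\bigl[V(\eta',S)<\infty\bigr],
\end{align*}
equivalently $\P[V(\eta',S)=\infty]\geq\P[V(\eta,S)=\infty]$. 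Thus if the model survives with counts $\eta$, so that the right-hand probability is positive, it survives with counts $\eta'$ as well.

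Every step beyond the two cited results is a one-line argument, so there is no serious obstacle; the only point demanding care is the limit $t\to 1^-$, where the convention $t^{\infty}=0$ is precisely what turns the generating-function inequality into the statement about $\P[V=\infty]$. This is also the place where the pgf order, rather than the stronger icv order, is genuinely what is needed: survival is governed by the behavior of the generating function near $t=1$.
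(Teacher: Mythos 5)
Your proof is correct and follows essentially the same route as the paper: the paper's proof likewise combines \thref{lem:visited} (the number of visited sites is a continuous pgf statistic) with \thref{thm:comparison}\ref{item:comparison.pgf} and the observation from Section~\ref{sec:orders} that $X\pgfprec Y$ implies $\P[X=\infty]\leq\P[Y=\infty]$. The only difference is that you spell out the monotone-convergence limit $t\nearrow 1$ explicitly, which the paper had already recorded as a general remark about the pgf order.
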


All of the applications so far follow work with either of parts~\ref{item:comparison.icv} and 
\ref{item:comparison.pgf} of \thref{thm:comparison}, monotonicity in the icv and pgf orders, respectively. 
As \ref{item:comparison.pgf} is the more difficult to prove, one might wonder why we bother with it.
Our interest stems from the role of the pgf order in \cite{HJJ1} in proving recurrence
for the frog model on the binary tree with one frog per site. 
Our argument there works by showing that the number of visits
to the root is stochastically larger than any Poisson distribution in the pgf order. This hinges on 
\cite[Lemma~10]{HJJ1}, which shows that a certain operator is monotone with respect to the pgf order.
The proof there is an unsatisfying calculation that cannot easily be extended to a general $d$-ary tree.
But as we explain in \thref{rmk:pgf.applications}, this lemma and its analogues for $d\geq 3$
are now immediate corollaries of 
\thref{thm:comparison}\ref{item:comparison.pgf}. We hope this will be helpful in other problems
such as establishing recurrence for the frog model on a $3$-ary tree.

\subsection*{Questions}

We will give a few open problems on the theme of comparison theorems. 
A wider range of problems on the frog model are listed in \cite{HJJ1,HJJ2}.

We are interested in how sensitive the recurrence of the frog model is to the distribution of the
frog counts. We believe that recurrence depends not just on the mean number of frogs at each vertex,
but on the entire distribution.

\begin{question}
  Give an example where $r(\eta,S) = \infty$ a.s.\ and $r(\eta',S)< \infty$ a.s.\  
  with $\E \eta(v) = \E \eta'(v)$ for all $v$.
\end{question}
\noindent Specifically, we would like to know that with simple random walk paths on the binary tree and
i.i.d.-$\pi$ frogs per vertex with mean~$1$, 
the frog model is transient when $\pi$ is sufficiently unconcentrated.

Another question of ours is on a stronger version of \thref{cor:Zd.shape}. In \cite{BK}, van den Berg
and Kesten prove that in first passage percolation, strictly decreasing the passage time distribution
in the icv order yields a strictly smaller time constant (and hence a strictly smaller limiting shape).
Most of their work is in establishing the strictness.
\begin{question}
  Let $\mathbf{A}$ and $\mathbf{A}'$ be the limiting shapes for a frog model on $\ZZ^d$ with i.i.d.-$\pi$
  and i.i.d.-$\pi'$ initial sleeping frogs per site, respectively. Under what conditions
  does it hold that $\pi\precneq_{\text{icv}}\pi'$ implies $\mathbf{A}\subsetneq\mathbf{A}'$?
\end{question}
\noindent This cannot hold in full generality, because all 
choices of $\nu$ with sufficiently heavy tails have the same limiting shape, 
the $L^1$-ball in $\RR^d$ \cite[Theorem~1.2]{random_shape}. But it might hold under the assumption
that $\nu$ and $\nu'$ have finite expectations, for example. It might also
hold in full generality for the continuous-time frog model, but in this setting the shape theorem
has only been proven for one per site initial conditions.

Finally, we are interested in comparing frog models when the graph rather 
than the initial configuration is altered.
As a concrete question in this vein, we ask if the $d$-regular tree is the most transient graph
in the following sense:
\begin{question}\thlabel{q:most.transient}
  Suppose the frog model is transient on a $d$-regular graph~$G$ with simple random walks. 
  Is it necessarily transient on an infinite $d$-regular tree with simple random
  walk paths and the same initial conditions?
\end{question}

\subsection*{Acknowledgments}
We are grateful to Chris Hoffman for his general assistance and to Jonathan Hermon for a discussion
in 2014 that mentioned \thref{q:most.transient}. We thank Martin Zerner, who
pointed us to the previous uses of the icv order where $X\icvprec Y$ was referred to as
$X$ being more variable than $Y$. We thank Robin Pemantle, who pointed out to us
the interpretation of the pgf order in terms of thinnings mentioned on page~\pageref{page:thinnings}.
We are also grateful to Rapha\"el Lachi\`eze-Rey for helping us with the French-language abstract.

\section{Background material on stochastic orders}\label{sec:orders}

Let $\pi_1$ and $\pi_2$ be probability measures on the extended nonegative 
real numbers $[0,\infty]$, 
and let $X\sim\pi_1$ and $Y\sim\pi_2$.
The following three stochastic orders play a role in this paper:
\begin{description}
  \item[Standard stochastic order] $\pi_1\stprec\pi_2$ if $\E f(X)\leq \E f(Y)$ for all bounded increasing
    functions $f\colon[0,\infty)\to \RR$, with $f(\infty)$ taken as $\lim_{x\to\infty}f(x)$.
  \item[Increasing concave order] $\pi_1\icvprec\pi_2$ if $\E f(X)\leq \E f(Y)$ for all bounded increasing
    concave functions $f\colon[0,\infty)\to\RR$, with $f(\infty)$ taken as $\lim_{x\to\infty}f(x)$.
  \item[Probability generating function order] $\pi_1\pgfprec\pi_2$ if $\E t^X\geq \E t^Y$ for
    all $t\in(0,1)$, with $t^{\infty}$ interpreted as $0$.
\end{description}
We use $X\stprec Y$, $X\stprec\pi_2$, and $\pi_1\stprec Y$ all to mean that 
$\pi_1\stprec\pi_2$, and we do the same with the other
two orders.

We have listed these three stochastic orders in decreasing strength. That is,
\begin{align} \label{eq:order.strength}
  \pi_1\stprec\pi_2 \implies \pi_1\icvprec\pi_2\implies\pi_1\pgfprec\pi_2.
\end{align}
The first implication is obvious. For the second, the map $x\mapsto 1-t^x$ is 
a bounded increasing concave function for any $t\in(0,1)$, establishing that $\E t^X\geq\E t^Y$
for $t\in(0,1)$ if $X\icvprec Y$.

See \cite{SS} for a reference on stochastic dominance. 
We have made a few slight changes from the
usual definitions found there. First, in the standard and
icv orders, we have required our test functions to be bounded. This apparently weaker definition
is in fact equivalent to the usual one, as seen by approximating 
an unbounded increasing or increasing concave function
by a sequence of bounded ones. Second, we have restricted ourselves to probability measures supported
on nonnegative numbers, which is just a convenience. Last,
we have allowed our probability measures to take the value
$\infty$ with positive probability.
All of the standard results on stochastic orderings
are unaffected by this change. It is worth noting that
if $X\pgfprec Y$, then
$\P[X=\infty]\leq\P[Y=\infty]$. To see this, note that as $t\nearrow 1$, we have $t^x\to\1\{x<\infty\}$.
Thus, by the monotone convergence theorem,
\begin{align*}
  \E t^X\to \P[X<\infty]\quad\qquad \text{and}\qquad \quad
  \E t^Y\to \P[Y<\infty]
\end{align*}
as $t\nearrow 1$. Now $\E t^X\geq\E t^Y$ for $t\in(0,1)$ implies that
$\P[X<\infty]\geq\P[Y<\infty]$. By \eqref{eq:order.strength}, the conclusion also holds
under the assumption $X\stprec Y$ or $X\icvprec Y$.
We also mention that a similar argument with a limit as $t\searrow 0$ shows that if
$X\pgfprec Y$, then $\P[X=0]\geq\P[Y=0]$.

Roughly speaking, the standard order rewards distributions for being large,
while the icv order rewards them either for being large or for being concentrated.
The characterizations of these two orders in terms of couplings make this more precise:
$X\stprec Y$ if and only if $X$ and $Y$ can be coupled so that $X\leq Y$ a.s.\ \cite[Theorem~1.A.1]{SS}, 
and $X\icvprec Y$ if and only if $X$ and $Y$ can be coupled so that
$\E[X\mid Y]\leq Y$ a.s.\ \cite[Theorem~4.A.5]{SS}. Another useful equivalent condition
for $\pi_1\stprec\pi_2$ is that $\P[X>t]\leq\P[Y>t]$ for all $t$.

A function $\varphi$ is called \emph{completely monotone} if it is infinitely differentiable and
\begin{align}\label{eq:cmd}
  (-1)^n\varphi^{(n)}(x)\geq 0
\end{align}
for all $n\geq 0$ and $x$ in the domain of the function. 
By Bernstein's characterization of the completely monotone functions
as mixtures of functions of the form $e^{-ux}$, the statement
 $X\pgfprec Y$ holds
if and only if $\E \varphi(X)\geq\E\varphi(Y)$ for all completely monotone functions
$\varphi$ on $[0,\infty)$, or equivalently if $\E\varphi(X)\leq \E\varphi(Y)$
for $\varphi$ with completely monotone derivative \cite[Theorem~5.A.3]{SS}.
Unlike the other two orders, the pgf order does not have a characterization in terms of couplings
as far as we know, although it does have a probabilistic interpretation in terms of \emph{thinnings}.
\label{page:thinnings}
The \emph{$p$-thinning} of a nonnegative integer-valued random variable $N$ is a random
variable with the conditional distribution $\Bin(N,p)$ given $N$.
If $X$ and $Y$ are integer-valued, then $X\pgfprec Y$ if and only if the $p$-thinning of $X$ is more likely
than the $p$-thinning of $Y$ to be zero, for any $p\in[0,1]$. The advantage of the pgf order in our
experience is that one can test it by explicit calculations, as we did in our proof
of recurrence of the one-per-site frog model on the binary tree in \cite{HJJ1}.

We now give a pair of standard propositions, whose proofs we include in the appendix
for the sake of completeness. The first proposition 
lets us use test functions for icv and pgf dominance defined only on the nonnegative integers rather than
on all of $[0,\infty)$. For a function~$f$ on the integers, we define the difference operator
\begin{align}\label{eq:D.def}
  D f(k) &= f(k+1)-f(k).
\end{align}
We call this $D$ rather than the more common $\Delta$ to avoid ambiguity with a related
operator we will define in Section~\ref{sec:comparison}.
Repeated application of $D$ yields the following expression (see \cite[eq.~(1.97)]{EC1}):
\begin{align}\label{eq:D}
  D^n f(k) &= \sum_{i=0}^n(-1)^{n-i}\binom{n}{i}f(k+i).
\end{align}
\begin{prop}\thlabel{prop:discrete.ineq}
  Let $X$ and $Y$ take values on the extended nonnegative integers.
  In the following statements, we assume that $\varphi(k)$ is a bounded function
  on the nonnegative integers with a limit as $k\to\infty$, and we interpret
  $\varphi(\infty)$ as this limit.
  \begin{enumerate}[label = (\alph*)]
    \item It holds that $X\icvprec Y$ if and only if $\E\varphi(X)\leq\E\varphi(Y)$
      for functions $\varphi$ as above that satisfy $D \varphi(k)\geq 0$ and 
      $D^2\varphi(k)\leq 0$ for all $k\geq 0$.
      \label{item:discrete.ineq.icv}
    \item It holds that $X\pgfprec Y$ if and only if $\E\varphi(X)\leq\E\varphi(Y)$
      for functions $\varphi$ as above that satisfy $(-1)^nD^n \varphi(k)\leq 0$
      for all $n\geq 1$ and $k\geq 0$.
      \label{item:discrete.ineq.pgf}
  \end{enumerate}
\end{prop}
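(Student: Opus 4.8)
The plan is to prove both parts by the same strategy: one direction is trivial from the definitions, and the substantive direction is to show that an arbitrary bounded increasing concave function (respectively, completely monotone derivative function) can be approximated, or bounded, using only its values on the integers, so that testing against integer-supported test functions suffices.

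For the ``only if'' direction of each part, I would note it is immediate: if $X \icvprec Y$, then by definition $\E \varphi(X) \le \E \varphi(Y)$ for \emph{all} bounded increasing concave $\varphi$ on $[0,\infty)$, and any bounded $\varphi$ on the integers with $D\varphi \ge 0$, $D^2\varphi \le 0$ is the restriction of such a function (interpolate linearly between integer points to get a genuinely increasing concave function on $[0,\infty)$ agreeing with $\varphi$ on $\ZZ_{\ge 0}$). The same linear-interpolation remark handles part~\ref{item:discrete.ineq.pgf} once one checks that the discrete complete-monotonicity conditions $(-1)^n D^n \varphi(k) \le 0$ are exactly what the restriction of a function with completely monotone derivative satisfies.

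The real content is the ``if'' direction. Here I would fix a bounded increasing concave $\varphi$ on $[0,\infty)$ (for part (a); for part (b), a $\varphi$ with completely monotone derivative) and produce from it a sequence of integer-supported test functions that pins down $\E\varphi(X)$ and $\E\varphi(Y)$. The cleanest route is: since $X$ and $Y$ take values on the \emph{integers} (including $\infty$), the expectations $\E\varphi(X)$ and $\E\varphi(Y)$ depend only on the values $\varphi(0),\varphi(1),\varphi(2),\ldots$ and the limit $\varphi(\infty)$. So it suffices to check that the restriction of $\varphi$ to the integers satisfies the discrete hypotheses, which follows because for an increasing concave smooth function $D\varphi(k) = \varphi(k+1)-\varphi(k) = \int_k^{k+1}\varphi' \ge 0$ and $D^2\varphi(k) = \varphi'(\xi_2)-\varphi'(\xi_1) \le 0$ for suitable $\xi_1<\xi_2$ by concavity of $\varphi'$-monotonicity; for part (b) one uses the completely monotone analogue, where $(-1)^n D^n$ applied to $\varphi$ inherits the sign from $(-1)^n\varphi^{(n)}\le 0$ via the integral representation of finite differences (a repeated application of the mean value theorem, or the Bernstein mixture $\varphi(x)=\int e^{-ux}\,d\mu(u)$, under which $(-1)^n D^n e^{-uk} = (1-e^{-u})^n e^{-uk}\ge 0$). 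Granting the hypothesis that $\E\tilde\varphi(X)\le\E\tilde\varphi(Y)$ for all such discrete test functions $\tilde\varphi$, applying it to $\tilde\varphi = \varphi|_{\ZZ}$ gives $\E\varphi(X)\le\E\varphi(Y)$, and since this holds for every bounded increasing concave (resp.\ completely monotone derivative) $\varphi$, we conclude $X\icvprec Y$ (resp.\ $X\pgfprec Y$).

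The main obstacle, and the step requiring genuine care rather than routine checking, is part~\ref{item:discrete.ineq.pgf}: verifying that the finite-difference conditions $(-1)^n D^n\varphi(k)\le 0$ really are equivalent to $\varphi$ having completely monotone derivative, uniformly over all orders $n$. The monotone-convergence and boundedness bookkeeping for the $\infty$-atom (ensuring $\varphi(\infty)=\lim_k \varphi(k)$ is handled consistently on both sides) is straightforward but must be stated. The cleanest way to dispatch the equivalence in (b) is via Bernstein's theorem: represent the continuous completely monotone derivative condition through the mixture $e^{-ux}$, compute $(-1)^n D^n e^{-uk} = (1-e^{-u})^n e^{-uk}$, which is manifestly nonnegative, and integrate against the Bernstein measure; the converse, that the discrete conditions force the mixture form on integer values, again follows because the expectations only see integer arguments, so no continuous reconstruction of $\varphi$ is actually needed. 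I expect the write-up to be short once this equivalence is phrased as ``$\E\varphi(X)$ depends only on $\varphi|_{\ZZ}$, and $\varphi|_{\ZZ}$ satisfies the discrete inequalities.''
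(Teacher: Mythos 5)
Your easy direction (restricting the continuous test functions to the integers) and your entire treatment of part~(a) (linear interpolation) are correct and coincide with the paper's proof. The gap is in the substantive direction of part~(b): that $X\pgfprec Y$ implies $\E\varphi(X)\le\E\varphi(Y)$ for every bounded $\varphi$ with $(-1)^nD^n\varphi(k)\le 0$ for all $n\ge 1$. Both mechanisms you offer for this step fail. Linear interpolation is useless here, since the interpolant has no second derivative, let alone a completely monotone one. And the equivalence you then propose --- that the discrete inequalities characterize exactly the restrictions to integers of functions with completely monotone derivative --- is false. Take $\varphi(k)=\1\{k\ge 1\}$: a direct computation gives $(-1)^nD^n\varphi(0)=-1$ and $D^n\varphi(k)=0$ for $k\ge 1$, $n\ge 1$, so $\varphi$ is an admissible discrete test function; but any extension $g$ on $[0,\infty)$ with completely monotone derivative is increasing and concave, hence identically $1$ on $[1,\infty)$, so $g'(2)=0$, and Bernstein's representation $g'(x)=\int e^{-ux}\,\mu(du)$ then forces $\mu=0$, i.e.\ $g'\equiv 0$, contradicting $g(1)-g(0)=1$. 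So the hypothesis $X\pgfprec Y$, which only supplies inequalities for test functions of the form $t^x$ (equivalently, for functions with completely monotone derivative), cannot be brought to bear on such a $\varphi$ by any extension argument.

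This is exactly where the paper has to do real work, and its proof shows what is missing from your sketch. It invokes Hausdorff's moment theorem --- your phrase ``the discrete conditions force the mixture form on integer values'' \emph{is} this theorem, an input that must be cited or proved rather than asserted --- to write $C-\varphi(k)=\int_{[0,1]}u^k\,\sigma(du)$, where $C$ is an upper bound on $\varphi$. The obstruction in the counterexample above is the possible atom of $\sigma$ at $u=0$: it contributes to $\varphi(0)$ but is invisible to every test function $t^x$ with $t\in(0,1)$. The paper therefore defines the completely monotone function $\psi(x)=\int_{[0,1]}u^x\,\sigma(du)$, which agrees with $C-\varphi$ only at integers $k\ge 1$ and satisfies $\psi(0)\le C-\varphi(0)$, and it closes the resulting deficit at $k=0$ using the separate fact that $X\pgfprec Y$ implies $\P[X=0]\ge\P[Y=0]$ (let $t\searrow 0$ in $\E t^X\ge \E t^Y$). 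Your proposal contains neither Hausdorff's theorem as an explicit step nor any mechanism for the $k=0$ defect, and both are essential; with those two pieces added, your outline becomes the paper's argument.
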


The next proposition shows that the maximal real- and integer-valued distributions
in the icv order with a given expectation are the distributions that are as concentrated as possible.

\begin{prop}\thlabel{prop:maximals}\ 
  \begin{enumerate}[label = (\alph*)]
    \item A nonnegative random variable satisfies $X\icvprec c$ for any $c\geq\E X$.\label{part:Jensen}
    \item Suppose $X$ takes nonnegative integer values and $\E X\in[k,k+1]$ for an integer~$k$.
      Let $Y$ be a random variable taking values in $\{k,k+1\}$ and satisfying $\E X\leq \E Y$.
      Then $X\icvprec Y$.\label{part:Bernoulli}
  \end{enumerate}
\end{prop}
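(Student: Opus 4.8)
For part \ref{part:Jensen} the plan is simply to combine Jensen's inequality with monotonicity. Note that a finite $c\geq\E X$ forces $\E X<\infty$ and hence $X<\infty$ a.s., so each bounded increasing concave test function $f$ may be treated as a function on $[0,\infty)$ evaluated at a finite argument. Jensen's inequality gives $\E f(X)\leq f(\E X)$, and since $f$ is increasing and $\E X\leq c$ we get $f(\E X)\leq f(c)=\E f(c)$. Chaining these yields $\E f(X)\leq \E f(c)$ for all such $f$, which is exactly $X\icvprec c$.

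For part \ref{part:Bernoulli} the idea is to route the comparison through the two-point distribution that has \emph{exactly} the right mean. Set $p=\E X-k\in[0,1]$ and let $Y_0$ be supported on $\{k,k+1\}$ with $\P[Y_0=k+1]=p$, so that $\E Y_0=\E X$. Because $Y$ is also supported on $\{k,k+1\}$ and $\E Y\geq \E X=k+p$, the variable $Y$ places at least as much mass on $k+1$ as $Y_0$ does; hence $Y_0\stprec Y$, and a fortiori $Y_0\icvprec Y$ by \eqref{eq:order.strength}. Since the icv order is transitive, it then suffices to establish $X\icvprec Y_0$.

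The heart of the argument is proving $X\icvprec Y_0$. By \thref{prop:discrete.ineq}\ref{item:discrete.ineq.icv} it is enough to show $\E\varphi(X)\leq\E\varphi(Y_0)$ for every bounded $\varphi$ on the nonnegative integers with $D\varphi\geq 0$ and $D^2\varphi\leq 0$. Such a $\varphi$ is a concave sequence, so I would compare it to the affine function $L$ determined by $L(k)=\varphi(k)$ and $L(k+1)=\varphi(k+1)$. Telescoping the non-increasing increments $D\varphi$ shows $\varphi(j)\leq L(j)$ for every integer $j$, with equality at $j=k,k+1$. Since $X$ is integer-valued and integrable (as $\E X<\infty$), integrating this pointwise bound and using that $L$ is affine gives $\E\varphi(X)\leq\E L(X)=L(\E X)=(1-p)\varphi(k)+p\,\varphi(k+1)=\E\varphi(Y_0)$, as required.

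I expect the only genuine content to be the chord inequality $\varphi\leq L$ at all integers; the rest is bookkeeping. The point requiring care is that discrete concavity must be invoked on both sides of $[k,k+1]$: to the right of $k+1$ the increments $D\varphi$ are at most the slope of $L$, while to the left of $k$ they are at least that slope, so $\varphi$ falls below the extended chord in both directions. Once this is secured, the affinity of $L$ collapses $\E L(X)$ to a quantity depending only on $\E X$, which is precisely the mechanism making the concentrated two-point law maximal. The boundedness and integrability technicalities are harmless here, since $\E X$ is finite and $\varphi$ is bounded.
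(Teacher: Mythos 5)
Your proof is correct, but it is organized differently from the paper's. For part~\ref{part:Bernoulli}, the paper compares $X$ to $Y$ directly: after recentering by $k$, it conditions on the events $\{X\leq k\}$ and $\{X\geq k+1\}$, applies Jensen's inequality to each conditional piece, and then uses concavity to push the two resulting points below the secant line through $(k,\varphi(k))$ and $(k+1,\varphi(k+1))$; monotonicity and the mean hypothesis on $Y$ enter through the single bound $\E\varphi(Y)\geq \varphi(k)+(\E X-k)\bigl(\varphi(k+1)-\varphi(k)\bigr)$. You instead factor the comparison as $X\icvprec Y_0\stprec Y$, where $Y_0$ is the two-point law with mean exactly $\E X$, and prove $X\icvprec Y_0$ via \thref{prop:discrete.ineq}\ref{item:discrete.ineq.icv} and a global pointwise majorization $\varphi\leq L$ by the affine extension of the chord on $[k,k+1]$, followed by $\E L(X)=L(\E X)$. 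Both arguments rest on the same geometric fact (a concave function lies below its extended chord), but your decomposition cleanly separates the role of concavity (the equal-mean comparison with $Y_0$) from that of monotonicity (the upward mass shift from $Y_0$ to $Y$), and it avoids both the conditional expectations and the degenerate-case analysis ($\P[X\leq k]=0$ or $\P[X\geq k+1]=0$) that the paper must handle; conversely, the paper's route works directly with arbitrary bounded increasing concave test functions on $[0,\infty)$ and does not need the discrete reduction of \thref{prop:discrete.ineq}. Part~\ref{part:Jensen} is the same in both: Jensen plus monotonicity.
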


\section{Proof of the comparison theorem}\label{sec:comparison}

We start by giving some technical facts about icv and pgf statistics and about
the operator $\Delta_{P_\proc}$.
As with the related operator~$D$ defined in \eqref{eq:D.def},
this operator can be applied repeatedly and expanded in the following way, which resembles
\eqref{eq:D}.
Let $P^1_\proc,\ldots,P^n_\proc$ be frog paths, and let $U=\{u_1,\ldots,u_j\}\subseteq[n]$, where we use
the notation $[n]=\{1,\ldots,n\}$.
Define
\begin{align}\label{eq:sigma.U}
  \sigma_U(\eta,S) = \sigma_{P^{u_1}_\proc}\cdots\sigma_{P^{u_j}_\proc}(\eta,S),
\end{align}
the frog model $(\eta,S)$ with the addition of frogs $P^{u_1}_\proc,\ldots,P^{u_j}_\proc$.
If $U$ is empty, take $\sigma_U(\eta,S)=(\eta,S)$.
Using this notation,
\begin{align}
  \Delta_{P^1_\proc}\cdots\Delta_{P^n_\proc}f(\eta, S)
    &= \sum_{U\subseteq[n]} (-1)^{n-\abs{U}}f(\sigma_U(\eta,S)).
  \label{eq:Delta.pp}
\end{align}
This can be proven by the same argument used in \cite[eq.~(1.97)]{EC1}.

Our next proposition states that icv statistics are closed under composition
with increasing concave functions, and that pgf statistics are closed under composition
with functions with completely monotone derivatives.
Though we could not find these statements
in existing literature, they are essentially equivalent to the well known fact that
increasing concave functions and functions with completely monotone derivatives both form
closed families
under composition. We give a full proof in the appendix.

\begin{prop}\thlabel{lem:composition}
  Let $\varphi\colon[0,\infty)\to[0,\infty)$ be bounded, and interpret $\varphi(\infty)$
  as $\lim_{x\to\infty}\varphi(x)$.
  \begin{enumerate}[label = (\alph*)]
  \item Suppose that $\varphi$ is increasing and concave. If $f$ is an icv statistic,
  then $\varphi\circ f$ is also an icv statistic. \label{item:composition.icv}
  \item Suppose that $\varphi$ has completely monotone derivative; that is, it satisfies
  \begin{align*}
    (-1)^n\varphi^{(n)}(x)\leq 0
  \end{align*}
  for all $n\geq 1$. If $f$ is a pgf statistic, then $\varphi\circ f$ is also
  a pgf statistic.\label{item:composition.pgf}
  \end{enumerate}
\end{prop}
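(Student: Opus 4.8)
The plan is to fix $(\eta,S)$ and paths $P^1_\proc,\dots,P^m_\proc$ at a common vertex, abbreviate $F(U)=f(\sigma_U(\eta,S))$ for $U\subseteq[m]$ (with $\sigma_U$ as in \eqref{eq:sigma.U}), and study the sign of $\Delta_{P^1_\proc}\cdots\Delta_{P^m_\proc}(\varphi\circ f)(\eta,S)=\sum_{U\subseteq[m]}(-1)^{m-\abs{U}}\varphi(F(U))$ via \eqref{eq:Delta.pp}. Since $\varphi$ is bounded, $\varphi\circ f$ is finite everywhere, so conditions~\ref{infinite.increasing} and~\ref{infinite.concave} hold vacuously for $\varphi\circ f$ and only \ref{main.condition} must be checked, now with no finiteness caveat. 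I first reduce away the possibility that some $F(U)=\infty$: by \ref{infinite.increasing} and the strengthened version of \ref{infinite.concave} recorded after \thref{def:declining}, the set $\{U:F(U)=\infty\}$ equals $\{U:U\cap I\neq\emptyset\}$ for $I=\{i:F(\{i\})=\infty\}$. Splitting the alternating sum into its finite and infinite terms and using $\sum_{U\subseteq T}(-1)^{\abs{U}}=\ind{T=\emptyset}$ to annihilate the constant value $\varphi(\infty)$ on the infinite part, the sum collapses to the analogous alternating sum over the finite subcube indexed by $[m]\setminus I$, or, when $I=[m]$, to $\varphi(F(\emptyset))-\varphi(\infty)\le0$. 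Hence it suffices to treat the case $F(U)<\infty$ for all $U$, where the hypotheses on $f$ say that $F$ is increasing and submodular (part~\ref{item:composition.icv}), respectively that all of $F$'s iterated differences alternate in sign (part~\ref{item:composition.pgf}).

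For part~\ref{item:composition.icv}, the case $m=1$ is just monotonicity of $\varphi\circ F$. For $m=2$, set $a=F(\emptyset)$, $b=F(\{1\})$, $c=F(\{2\})$, $d=F(\{1,2\})$, so that $a\le b$, $a\le c$, $b\le d$, $c\le d$, and $a+d\le b+c$. Taking $b\le c$ without loss of generality and putting $\delta=b-a\ge0$ (whence $d\le c+\delta$), I would chain $\varphi(d)-\varphi(c)\le\varphi(c+\delta)-\varphi(c)\le\varphi(a+\delta)-\varphi(a)=\varphi(b)-\varphi(a)$: the first step uses that $\varphi$ is increasing and $d\le c+\delta$, the second that $x\mapsto\varphi(x+\delta)-\varphi(x)$ is nonincreasing by concavity together with $c\ge a$. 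Rearranged, this is exactly $\varphi(d)-\varphi(b)-\varphi(c)+\varphi(a)\le0$, the $m=2$ instance of \ref{main.condition}.

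For part~\ref{item:composition.pgf} I would reduce to exponential building blocks. Since $\varphi'$ is completely monotone, Bernstein's theorem (the discussion around \eqref{eq:cmd}) gives $\varphi(x)=\varphi(0)+\int_{(0,\infty)}(1-e^{-ux})\,\nu(du)$ for a nonnegative measure $\nu$. Writing each $\varphi(F(U))$ this way and combining the finitely many integrals, the alternating sum becomes $-\int_{(0,\infty)}\bigl(\sum_{U\subseteq[m]}(-1)^{m-\abs{U}}e^{-uF(U)}\bigr)\,\nu(du)$, so the claim reduces to showing that for each fixed $u>0$ the function $U\mapsto e^{-uF(U)}$ is completely monotone on the cube, meaning all its iterated differences $G(U)-G(U\cup\{i\})$ are nonnegative. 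The decisive step is to factor out the constant $e^{-uF([m])}$ and write $e^{-uF(U)}=e^{-uF([m])}e^{uK(U)}$ with $K(U)=F([m])-F(U)\ge0$. Because $K$ differs from $-F$ by a constant, the pgf hypothesis on $F$ becomes complete monotonicity of $K$ on the cube. Expanding $e^{uK}=\sum_{n\ge0}\frac{u^n}{n!}K^n$ then presents $e^{uK}$ as a convergent nonnegative combination of the powers $K^n$, and since nonnegative combinations and pointwise limits visibly preserve complete monotonicity on the cube, it remains only to know that products of cube-completely-monotone functions are again completely monotone. Tracking the signs back through the Bernstein integral yields $(-1)^m\Delta_{P^1_\proc}\cdots\Delta_{P^m_\proc}(\varphi\circ f)(\eta,S)\le0$ for every $m\ge1$.

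The main obstacle is precisely this closure under products, which is the engine of the pgf case. I would derive it from a moment representation: a nonnegative function $G$ on $\{0,1\}^S$ is completely monotone on the cube if and only if $G(U)=\E\prod_{i\in U}X_i$ for some family $(X_i)_{i\in S}$ of $[0,1]$-valued random variables. Granting this, if $G_1$ and $G_2$ are represented by independent families $(X_i)$ and $(Y_i)$, then $G_1(U)G_2(U)=\E\prod_{i\in U}(X_iY_i)$ with $X_iY_i\in[0,1]$, so the product is again completely monotone---the discrete shadow of the classical fact that completely monotone functions are closed under multiplication. The remaining ingredients, namely the vacuity of the infinite-value conditions, the reduction to finite subcubes, and all of part~\ref{item:composition.icv}, are comparatively routine, so the representation lemma (essentially a finite Hausdorff moment statement) is where the genuine work sits, and where one must take care to match the finite-cube combinatorics to the classical picture.
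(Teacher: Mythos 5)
Your proposal is correct, but it proceeds by a genuinely different route than the paper's. The paper handles (a) and (b) uniformly by passing to the continuum: it extends $U\mapsto f(\sigma_U(\eta,S))$ to the unique multilinear polynomial $p$ agreeing with it on $\{0,1\}^n$, transfers the discrete sign hypotheses to partial derivatives of $p$ on all of $[0,1]^n$ (using that multilinear polynomials attain their extrema at vertices), writes the iterated difference of $\varphi\circ p$ as $\int_{[0,1]^n}\partial_1\cdots\partial_n(\varphi\circ p)$, and reads off signs from the multivariate Fa\`a di Bruno expansion. You stay entirely discrete: part (a) by a bare-hands secant/increment argument, and part (b) by Bernstein's theorem, writing $\varphi(x)=\varphi(0)+\int_{(0,\infty)}(1-e^{-ux})\,\nu(du)$ and reducing to the claim that $U\mapsto e^{-uF(U)}$ has all alternating iterated differences of the right sign (``cube-CM''), which you get from cube-CM of $K(U)=F([m])-F(U)$, the exponential series, and closure of the cube-CM class under products, nonnegative combinations, and pointwise limits. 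Your route buys two real things: part (a) needs no differentiability of $\varphi$ whatsoever, whereas the paper's Fa\`a di Bruno step tacitly uses $\varphi\in C^2$ even in case (a) (a bounded increasing concave $\varphi$ need not be smooth, so strictly an approximation step is being elided there); and your treatment of infinite values, collapsing to the subcube indexed by $[m]\setminus I$ with $I=\{i: F(\{i\})=\infty\}$, is cleaner than the paper's minimality argument (just dispose first of the trivial case $F(\emptyset)=\infty$, where every term equals $\varphi(\infty)$ and the alternating sum vanishes; your set identity assumes $F(\emptyset)<\infty$). What the paper's route buys is uniformity---one computation covers both parts---and independence from Bernstein's theorem and any moment representation.

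The one ingredient you leave unproved is exactly the one part (b) leans on: closure of cube-CM functions under products, via the representation $G(U)=\E\prod_{i\in U}X_i$. Two remarks finish it. First, as stated the representation forces $G(\emptyset)=\E 1=1$, so normalize: the correct statement is $G(U)=G(\emptyset)\,\E\prod_{i\in U}X_i$, with $G\equiv 0$ when $G(\emptyset)=0$ (cube-CM functions are nonnegative and nonincreasing). Second, granting that, the lemma is a short M\"obius inversion: set $c_V=\sum_{W\supseteq V}(-1)^{\abs{W}-\abs{V}}G(W)$, which is precisely an iterated difference of $G$ and hence nonnegative; inversion gives $G(U)=\sum_{V\supseteq U}c_V$; draw a random set $V$ with $\P[V=V_0]=c_{V_0}/G(\emptyset)$ and put $X_i=\ind{i\in V}$, so that $G(U)=G(\emptyset)\P[U\subseteq V]$. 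Products then correspond to intersections of independent random sets, which is your closure statement. Alternatively, you can avoid the representation entirely and prove closure by induction on the number of differences via the discrete Leibniz rule
\begin{align*}
G(U)H(U)-G(U\cup\{i\})H(U\cup\{i\})
=\bigl(G(U)-G(U\cup\{i\})\bigr)H(U)+G(U\cup\{i\})\bigl(H(U)-H(U\cup\{i\})\bigr),
\end{align*}
since each difference turns a product of cube-CM functions into a sum of two such products. With either patch, your argument is complete.
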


In the next lemma, we show that icv and pgf statistics are monotone in the distribution of frogs
at a \emph{single} vertex.
\begin{lemma}\thlabel{lem:change.one}
  Make the assumptions of \thref{thm:comparison} on the distribution of frog paths $S_\proc(v,i)$
  and counts $\eta(v)$ and $\eta'(v)$. Also assume that $\eta$ and $\eta'$ have identical
  distributions at all but one vertex~$v_0$.
  \begin{enumerate}[label = (\alph*)]
    \item If $f$ is an icv statistic and $\eta(v_0)\icvprec\eta'(v_0)$, then
      $f(\eta,S)\icvprec f(\eta',S)$.\label{item:change.icv}
    \item If $f$ is a pgf statistic and $\eta(v_0)\pgfprec\eta'(v_0)$, then
      $f(\eta,S)\pgfprec f(\eta',S)$.\label{item:change.pgf}
  \end{enumerate}
\end{lemma}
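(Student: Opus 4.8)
The plan is to combine three ingredients: \thref{lem:composition} to strip off the test function, a conditioning-and-exchangeability computation at the distinguished vertex $v_0$, and \thref{prop:discrete.ineq} to convert dominance of the single-vertex count into dominance of the statistic. First I would reduce to a bounded statistic. To obtain $f(\eta,S)\icvprec f(\eta',S)$, by definition of the icv order I must show $\E\psi(f(\eta,S))\le\E\psi(f(\eta',S))$ for every bounded increasing concave $\psi$; writing $h=\psi\circ f$, part \ref{item:composition.icv} of \thref{lem:composition} makes $h$ a bounded icv statistic, so it suffices to prove $\E h(\eta,S)\le\E h(\eta',S)$ for an arbitrary bounded icv statistic $h$. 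For part (b) I take $\psi(x)=1-t^x$ with $t\in(0,1)$, which is bounded with completely monotone derivative, so $h=\psi\circ f$ is a bounded pgf statistic by part \ref{item:composition.pgf}, and the target $\E t^{f(\eta,S)}\ge\E t^{f(\eta',S)}$ is again $\E h(\eta,S)\le\E h(\eta',S)$.

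Next I would condition on everything but the count at $v_0$. Let $P^i_\proc=S_\proc(v_0,i)$ be the i.i.d.\ paths there, let $\eta^{(n)}$ be the configuration that agrees with $\eta$ off $v_0$ and carries $n$ frogs with paths $P^1_\proc,\dots,P^n_\proc$ at $v_0$, and set $\bar g(n)=\E[h(\eta^{(n)},S)]$, averaging over the paths and the counts at the other vertices. Because $\eta$ and $\eta'$ differ only in the law of the count at $v_0$, and that count is independent of all else, we get $\E h(\eta,S)=\E\bar g(X)$ and $\E h(\eta',S)=\E\bar g(Y)$ with $X=\eta(v_0)$, $Y=\eta'(v_0)$ and the \emph{same} function $\bar g$. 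Since $h$ is bounded and nondecreasing under the addition of frogs, $\bar g$ is bounded and nondecreasing and hence has a limit at infinity. By \thref{prop:discrete.ineq}, the inequality $\E\bar g(X)\le\E\bar g(Y)$ will follow from $X\icvprec Y$ (resp.\ $X\pgfprec Y$) as soon as the relevant sign conditions on the forward differences $D^m\bar g$ are checked.

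Everything then reduces to the identity
\[
  D^m\bar g(n)=\E\bigl[\Delta_{P^{n+1}_\proc}\cdots\Delta_{P^{n+m}_\proc}h(\eta^{(n)},S)\bigr].
\]
Granting it, \thref{def:declining}\ref{main.condition} gives $D\bar g\ge 0$ and $D^2\bar g\le 0$ (and, in the pgf case, $(-1)^mD^m\bar g\le 0$ for all $m$), with no finiteness caveat since $h$ is bounded, and \thref{prop:discrete.ineq} finishes both parts. To prove the identity I would expand the right-hand side using \eqref{eq:Delta.pp} into $\sum_{U\subseteq[m]}(-1)^{m-\abs{U}}\E[h(\sigma_U(\eta^{(n)},S))]$, where $\sigma_U$ adjoins the frogs with paths $\{P^{n+u}_\proc:u\in U\}$. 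The crucial step, which I expect to be the main obstacle, is that each such expectation equals $\bar g(n+\abs{U})$: the adjoined paths are i.i.d.\ and independent of $\eta^{(n)}$, so adding any $\abs{U}$ of them is distributionally identical to adding $P^{n+1}_\proc,\dots,P^{n+\abs{U}}_\proc$. This exchangeability is exactly where the independence and identical-distribution hypotheses of \thref{thm:comparison} enter, and it is indispensable: pathwise there is no reason for $D^2g\le 0$ to hold, since for fixed paths $h(\sigma_{P^{n+2}_\proc}(\eta^{(n)},S))\ne h(\sigma_{P^{n+1}_\proc}(\eta^{(n)},S))$ in general, and the concavity emerges only after averaging. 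Once the substitution is made, the sum collapses by \eqref{eq:D} to $\sum_{j=0}^m(-1)^{m-j}\binom{m}{j}\bar g(n+j)=D^m\bar g(n)$, which completes the argument; the icv case needs only $m=1,2$ while the pgf case uses all $m\ge 1$.
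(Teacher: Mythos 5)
Your proposal is correct and follows essentially the same route as the paper's proof: compose $f$ with the test function, invoke \thref{lem:composition} so the composition is an icv/pgf statistic, use the i.i.d.-paths-at-$v_0$ assumption to identify the forward differences $D^m$ of the conditional-expectation function with $\E\bigl[\Delta_{P^1_\proc}\cdots\Delta_{P^m_\proc}(\varphi\circ f)\bigr]$ via \eqref{eq:D} and \eqref{eq:Delta.pp}, and finish with \thref{prop:discrete.ineq}. The only differences are cosmetic (you phrase the reduction as proving $\E h(\eta,S)\le\E h(\eta',S)$ for an arbitrary bounded icv/pgf statistic $h$, and you make explicit the observation that boundedness removes the finiteness caveat and that concavity only emerges after averaging over paths), so no further changes are needed.
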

\begin{proof}
  Define $\eta_k$ to be the same as $\eta$ except that $\eta_k(v_0)=k$.
  Let $W(k)=f(\eta_k,S)$. By our assumptions, $\eta(v_0)$ and $\eta'(v_0)$ are independent of $W(k)$,
  and hence 
  \begin{align}\label{eq:W.dist}
    W(\eta(v_0))\sim f(\eta,S)\qquad\text{and}\qquad W(\eta'(v_0))\sim f(\eta',S).
  \end{align}
  
  We start with the proof of \ref{item:change.icv}.
  Let $\varphi\colon [0,\infty)\to[0,\infty)$ be an arbitrary bounded increasing concave function,
  and let $h(k)=\E\varphi(W(k))$ for $k\in\{0,1,\ldots\}$.
  As
  \begin{align*}
    \E h(\eta(v_0)) = \E \varphi(f(\eta,S))
    \qquad\text{and}\qquad
    \E h(\eta'(v_0)) = \E \varphi(f(\eta',S))
  \end{align*}
  by \eqref{eq:W.dist},
  our goal is to show that $\E h(\eta(v_0))\leq \E h(\eta'(v_0))$.
  If we can show that $D h(k)\geq 0$ and $D^2 h(k)\leq 0$, then
  this follows immediately from \thref{prop:discrete.ineq}\ref{item:discrete.ineq.icv}
  and the assumption that $\eta(v_0)\icvprec\eta'(v_0)$.
  
  By definition of $h$ and $W$,
  \begin{align*}
    D h(k) = \E\bigl[ \varphi(f(\eta_{k+1},S)) - \varphi(f(\eta_k,S)) \bigr].
  \end{align*}
  As $f(\eta_{k+1},S)\geq f(\eta_k,S)$ and $\varphi$ is increasing, one can see directly
  that $D h(k)\geq 0$, but it is more instructive to derive this as a consequence of
  \thref{lem:composition}.
  Let $P_\proc$ be an independent copy of $S_\proc(v_0,1)$. By our assumption that
  $(S_\proc(v_0,i))_{i\geq 1}$ are i.i.d.\ and independent of the other frog paths, 
  the frog model $(\eta_{k+1},S)$ is distributed the same
  as $\sigma_{P_\proc}(\eta_k,S)$. Hence
  \begin{align}
    \begin{split}
    D h(k) &= \E\bigl[ \varphi(f(\sigma_{P_\proc}(\eta_k,S)) - \varphi(f(\eta_k,S)) \bigr]\\
      &= \E\bigl[ \Delta_{P_\proc}(\varphi\circ f)(\eta_k,S) \bigr],
    \end{split}
    \label{eq:Delta1}
  \end{align}
  which is nonnegative since $\varphi\circ f$ is an icv statistic by 
  \thref{lem:composition}\ref{item:composition.icv}.
  Similarly, if $P^1_\proc$ and $P^2_\proc$ are independent copies of $S_\proc(v_0,i)$,
  \begin{align}
    \begin{split}
    D^2 h(k) &= \E\bigl[ \varphi(f(\eta_{k+2},S)) - 2\varphi(f(\eta_{k+1},S)) + \varphi(f(\eta_k,S)) 
                          \bigr]\\
                  &= \E\bigl[ \varphi(f(\sigma_{P^1_\proc}\sigma_{P^2_\proc}(\eta_k,S)))
                    - \varphi(f(\sigma_{P^1_\proc}(\eta_k,S))) - \varphi(f(\sigma_{P^2_\proc}(\eta_k,S)))
                    + \varphi(f(\eta_k,S)) \bigr]\\
                  &= \E\bigl[ \Delta_{P^1_\proc}\Delta_{P^2_\proc}(\varphi\circ f)(\eta_k, S) \bigr]
                     \leq 0.
    \end{split}
    \label{eq:Delta2}
  \end{align}
  This concludes the proof of part~\ref{item:change.icv}.
  
  The proof of \ref{item:change.pgf} is essentially the same.
  We take $\varphi(x)=1-t^x$ for arbitrary $t\in(0,1)$ and define
  $h(k)=\E\varphi(W(k))$ as before. This time, we need to show that
  $(-1)^nD^nh(k)\leq 0$ for all $n\geq 1$. By \thref{prop:discrete.ineq}\ref{item:discrete.ineq.pgf}
  and the assumption that $\eta(v_0)\pgfprec\eta'(v_0)$, it follows from this that
  $\E h(\eta(v_0))\leq \E h(\eta'(v_0))$, and hence that 
  $\E \varphi(f(\eta,S))\leq \E \varphi(f(\eta',S))$.
  
  Let $P^1_\proc,\ldots,P^n_\proc$ be independent copies of $S_\proc(v_0,i)$.
  Using the notation of \eqref{eq:sigma.U}, for any $U\subseteq[n]$ with $\abs{U}=i$,
  the frog model $(\eta_{k+i},S)$ is distributed identically to $\sigma_U(\eta_k,S)$.
  We now generalize \eqref{eq:Delta1} and \eqref{eq:Delta2} by applying
  \eqref{eq:D} and \eqref{eq:Delta.pp} to get
  \begin{align*}
    D^n h(k) &= \E\Biggl[ \sum_{i=0}^n(-1)^{n-i}\binom{n}{i} \varphi(f(\eta_{k+i},S)) \Biggr]\\
      &=\E \Biggl[ \sum_{i=0}^n (-1)^{n-i}\sum_{\substack{U\subseteq[n]\\\abs{U}=i}}
                                  \varphi(f(\sigma_U(\eta_k,S)))\Biggr]\\    
    &=\E\bigl[ \Delta_{P^1_\proc}\cdots\Delta_{P^n_\proc}(\varphi\circ f) \bigr].
  \end{align*}
  As $\varphi\circ f$ is a pgf statistic by 
  \thref{lem:composition}\ref{item:composition.pgf}, this shows that
  $(-1)^nD^n h(k)\leq 0$, completing the proof.
\end{proof}

\begin{proof}[Proof of Theorem~\ref{thm:comparison}]
  The basic idea is that \thref{lem:change.one} proves the result when
  $\eta$ and $\eta'$ have the same distribution
  at all but finitely many vertices, with the general case following from a limit
  argument relying on the continuity assumption.
  Recall from \thref{def:declining} that we call a frog model statistic continuous
  if the upward convergence of frog counts implies the upward convergence of the statistic.
  
  Let $G_1\subseteq G_2\subseteq\cdots$ be finite sets of vertices whose union is $G$.
  We use $\eta|_{G_k}$ and $\eta'|_{G_k}$ to denote restrictions to $G_k$. That is,
  $\eta|_{G_k}(v)=\eta(v)\1\{v\in G_k\}$.
  Since $\eta|_{G_k}$ and $\eta'|_{G_k}$ differ at only finitely many vertices, 
  the repeated application of \thref{lem:change.one}
  proves that in case~\ref{item:comparison.icv},
  \begin{align}\label{eq:finite.case.icv}
    f(\eta|_{G_k},S)\icvprec f(\eta'|_{G_k},S),
  \end{align}
  and in case~\ref{item:comparison.pgf},
  \begin{align}\label{eq:finite.case.pgf}
    f(\eta|_{G_k},S)\pgfprec f(\eta'|_{G_k},S).
  \end{align}
    
  Now, we let $\varphi$ be a test function and try to show that 
  \begin{align}\label{eq:test.function}
    \E\varphi\bigl(f(\eta,S)\bigr)\leq\E\varphi\bigl(f(\eta',S)\bigr).
  \end{align}  
  For case~\ref{item:comparison.icv}, let $\varphi\colon[0,\infty)\to[0,\infty)$ be a bounded 
  increasing concave function. It suffices to show \eqref{eq:test.function} for such functions $f$,
  as any arbitrary bounded 
  increasing concave function can be shifted to take nonnegative values. 
  In case~\ref{item:comparison.pgf},
  let $\varphi(x)= 1-t^x$ for some $t\in(0,1)$.
  Interpreting $\varphi(\infty)$ as $\lim_{x\to\infty}\varphi(x)$ as usual,
  it holds by the continuity assumption that
  \begin{align*}
    \varphi\bigl(f(\eta|_{G_k},S)\bigr)\nearrow \varphi\bigl(f(\eta,S)\bigr)\text{ a.s.\qquad and \qquad}
    \varphi\bigl(f(\eta'|_{G_k},S)\bigr)\nearrow\varphi\bigl(f(\eta',S)\bigr)\text{ a.s.}
  \end{align*}
  as $k\to\infty$.
  By the monotone convergence theorem,
  \begin{align*}
    \E\varphi\bigl(f(\eta|_{G_k},S)\bigr)\to \E\varphi\bigl(f(\eta,S)\bigr)\qquad\quad \text{and}\qquad \quad
       \E\varphi\bigl(f(\eta'|_{G_k},S)\bigr)\to \E\varphi\bigl(f(\eta',S)\bigr)
  \end{align*}
  as $k\to\infty$.
  By \eqref{eq:finite.case.icv} or \eqref{eq:finite.case.pgf}, we have
  $\E\varphi\bigl(f(\eta|_{G_k},S)\bigr)\leq \E\varphi\bigl(f(\eta'|_{G_k},S)\bigr)$, and
  this proves \eqref{eq:test.function}.
\end{proof}

\section{Applications of the comparison theorem}\label{sec:applications}

To apply \thref{thm:comparison}, we first need to find some icv and pgf statistics.
The following two lemmas highlight particular circumstances where we can draw conclusions
about the difference operators applied to a statistic.
For any frog model $(\eta,S)$, we define $\kappa_v(\eta, S)$ as the frog model 
resulting from deleting all frogs that start at $v$ from $(\eta, S)$. Formally,
$\kappa_v(\eta,S)=(\eta',S)$, where $\eta'$ is identical to $\eta$
except that $\eta'(v)=0$.

\begin{lemma}\thlabel{lem:stat.builder}
  Let $f$ be a frog model statistic taking values only in $\{0,1\}$.
  Suppose that for some vertex~$v$ and all $(\eta,S)$,
  \begin{align}\label{eq:subadditivity}
    f(\eta,S) = \max\Bigl\{f\bigl(\kappa_{v}(\eta,S)\bigr),\; f\bigl(\sigma_{S_\proc^1}(\kappa_v(\eta,S))\bigr),
    \ldots, f\bigl(\sigma_{S_\proc^{\eta(v)}}(\kappa_v(\eta,S))\bigr) \Bigr\}.
  \end{align}
  Then for any paths $P^1_\proc,\ldots,P^n_\proc$ originating at $v$ and all $(\eta,S)$,
  \begin{align*}
    (-1)^n\Delta_{P^1_\proc}\cdots\Delta_{P^n_\proc}f(\eta,S) \leq 0.
  \end{align*}
\end{lemma}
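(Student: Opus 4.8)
The plan is to exploit the interplay between the hypothesis \eqref{eq:subadditivity}, which has the form of a logical OR, and the fact that $f$ takes only the values $0$ and $1$, so that the entire alternating difference collapses to a binomial identity. Throughout I may assume $n\geq 1$.

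First I would freeze the frog configuration at every vertex other than $v$, so that $\kappa_v(\eta,S)$ is held fixed while only the frogs placed at $v$ vary. Set $M:=f(\eta,S)$ and, for each $j\in[n]$, $c_j:=f\bigl(\sigma_{P^j_\proc}(\kappa_v(\eta,S))\bigr)$, all of which lie in $\{0,1\}$; here $c_j$ records whether $f$ equals $1$ when the only frog at $v$ is the newly added one with path $P^j_\proc$. Because $\kappa_v$ deletes every frog at $v$, we have $\kappa_v(\sigma_U(\eta,S))=\kappa_v(\eta,S)$ for every $U\subseteq[n]$, and the frogs at $v$ in $\sigma_U(\eta,S)$ are exactly the original ones together with $\{P^j_\proc:j\in U\}$. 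Applying \eqref{eq:subadditivity} to the augmented model $\sigma_U(\eta,S)$ (legitimate since the hypothesis is assumed for all frog models) then yields
\[
  f\bigl(\sigma_U(\eta,S)\bigr)=\max\Bigl\{M,\ \max_{j\in U}c_j\Bigr\},
\]
where the $U=\emptyset$ instance is just the hypothesis applied to $(\eta,S)$ and recovers $M$.

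Next I would substitute this into the expansion \eqref{eq:Delta.pp}, giving
\[
  \Delta_{P^1_\proc}\cdots\Delta_{P^n_\proc}f(\eta,S)
    =\sum_{U\subseteq[n]}(-1)^{n-\abs{U}}\max\Bigl\{M,\ \max_{j\in U}c_j\Bigr\}.
\]
If $M=1$ every summand equals $1$ and the sum is $\sum_{i=0}^n(-1)^{n-i}\binom{n}{i}=0$. If $M=0$, then the summand equals $\ind{U\cap C\neq\emptyset}$ with $C=\{j:c_j=1\}$; writing this as $1-\ind{U\subseteq[n]\setminus C}$ and using $\sum_{U\subseteq[n]}(-1)^{n-\abs{U}}=0$ reduces the sum to $-\sum_{U\subseteq[n]\setminus C}(-1)^{n-\abs{U}}=-(-1)^n\,0^{\,n-\abs{C}}$. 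This vanishes unless $C=[n]$, in which case it equals $-(-1)^n$. In the first case $(-1)^n\cdot 0=0$, and in the second $(-1)^n\cdot(-(-1)^n)=-1$, so in either case $(-1)^n\Delta_{P^1_\proc}\cdots\Delta_{P^n_\proc}f(\eta,S)\leq 0$, as required.

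The only real content, and hence the step I would be most careful about, is the reduction in the second paragraph: recognizing that once the configuration off $v$ is fixed, the maximum in \eqref{eq:subadditivity} turns the dependence of $f$ on the added frogs into the single disjunction $\max\{M,\max_{j\in U}c_j\}$. Everything afterward is the standard inclusion–exclusion identity. The subtle point is that \eqref{eq:subadditivity} must be invoked at each augmented model $\sigma_U(\eta,S)$, not merely at $(\eta,S)$, which is permitted precisely because the hypothesis is stated for all $(\eta,S)$.
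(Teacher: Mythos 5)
Your proof is correct and takes essentially the same route as the paper's: both apply the max hypothesis \eqref{eq:subadditivity} to each augmented model $\sigma_U(\eta,S)$ to turn $f(\sigma_U(\eta,S))$ into a disjunction of single-added-frog indicators, then evaluate the alternating sum by a binomial/inclusion--exclusion identity, finding it is $0$ unless $f(\eta,S)=0$ and every single added frog yields $f=1$, in which case it equals $(-1)^{n+1}$. The only cosmetic differences are that you define $c_j$ by adding $P^j_\proc$ to $\kappa_v(\eta,S)$ rather than to $(\eta,S)$ (these agree in the relevant case $f(\eta,S)=0$), and you sum over subsets of $[n]\setminus C$ where the paper instead factors $\sum_{U\subseteq[n]}(-1)^{n-\abs{U}}\prod_{i\in U}(1-b_i)$ as $\prod_{i=1}^n(-b_i)$.
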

\begin{proof}
  Fix any $(\eta,S)$ and paths $P^1_\proc,\ldots,P^n_\proc$ starting at $v$.
  Since $f$ is increasing as additional frogs are added by \eqref{eq:subadditivity},
  if $f(\eta,S)=1$, then $f(\sigma_U(\eta,S))=1$ for any $U\subseteq[n]$,
  using the notation given in \eqref{eq:sigma.U}. Hence
  \begin{align*}
    \Delta_{P^1_\proc}\cdots\Delta_{P^n_\proc}f(\eta,S)=0,
  \end{align*}
  and so the lemma holds in this case.
  If $f(\eta,S)=0$, define $b_i=f(\sigma_{P_\proc^i}(\eta,S))$, the statistic after adding
  $P_\proc^i$ to $(\eta,S)$. By \eqref{eq:subadditivity}, $f(\sigma_U(\eta,S))=1$ if
  and only if $b_i=1$ for some $i\in U$. Thus
  \begin{align*}
    f(\sigma_U(\eta,S)) &= \max_{i\in U}b_i = 1 - \prod_{i\in U}(1-b_i),
  \end{align*}
  and \eqref{eq:Delta.pp} gives
  \begin{align*}
    \Delta_{P^1_\proc}\cdots\Delta_{P^n_\proc} f(\eta,S)
      &= \sum_{U\subseteq[n]}(-1)^{n-\abs{U}}\biggl(1 - \prod_{i\in U}(1-b_i)\biggr)\\
      &= \sum_{U\subseteq[n]}(-1)^{n-\abs{U}} - \sum_{U\subseteq[n]}(-1)^{n-\abs{U}}\prod_{i\in U}(1-b_i).
  \end{align*}
  The first sum is the expansion of $(1-1)^n$ and hence is zero. For the second sum,
  \begin{align*}
    \sum_{U\subseteq[n]}(-1)^{n-\abs{U}}\prod_{i\in U}(1-b_i) &= \prod_{i=1}^n\bigl((1-b_i) - 1\bigr)\\
      &= \prod_{i=1}^n(-b_i) = (-1)^n\1\{b_1=\cdots=b_n=1\}.
  \end{align*}
  Thus
  \begin{align*}
    \Delta_{P^1_\proc}\cdots\Delta_{P^n_\proc}f(\eta,S) &= (-1)^{n+1}\1\{b_1=\cdots=b_n=1\},
  \end{align*}
  yielding $(-1)^n\Delta_{P^1_\proc}\cdots\Delta_{P^n_\proc}f(\eta,S)\leq 0.$
\end{proof}

\begin{lemma}\thlabel{lem:stat.builder.sum}
  Let $f$ be a frog model statistic taking values in $[0,\infty]$.
  Suppose that for some vertex~$v$ and all $(\eta,S)$,
  \begin{align}\label{eq:additivity}
    f(\eta,S) = \sum_{i=1}^{\eta(v)}f\bigl(\sigma_{S_\proc(v,i)}(\kappa_{v}(\eta,S))\bigr).
  \end{align}
  Then conditions \ref{main.condition}, \ref{infinite.increasing}, and
  \ref{infinite.concave} of being a pgf statistic hold for all $(\eta,S)$ and
  paths $P^1_\proc,\ldots,P^n_\proc$ originating at $v$.
\end{lemma}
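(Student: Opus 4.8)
The plan is to show that the additivity hypothesis \eqref{eq:additivity} forces $f$ to depend on frogs added at $v$ in a purely \emph{affine} way, after which all three conditions follow from elementary extended-real arithmetic together with the inclusion--exclusion cancellation already behind \eqref{eq:Delta.pp}.

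First I would establish the key decomposition. Fix $(\eta,S)$ and paths $P^1_\proc,\ldots,P^n_\proc$ all originating at $v$, and set $a_u=f\bigl(\sigma_{P^u_\proc}(\kappa_v(\eta,S))\bigr)\in[0,\infty]$ for $u\in[n]$. Since deleting all frogs at $v$ also removes the added frogs, $\kappa_v\bigl(\sigma_U(\eta,S)\bigr)=\kappa_v(\eta,S)$ for every $U\subseteq[n]$. The frogs at $v$ in $\sigma_U(\eta,S)$ are the original $\eta(v)$ frogs together with the added frogs indexed by $U$, and in \eqref{eq:additivity} each frog's contribution is evaluated after all other frogs at $v$ have been deleted. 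Applying \eqref{eq:additivity} to $\sigma_U(\eta,S)$ therefore yields
\begin{align*}
  f\bigl(\sigma_U(\eta,S)\bigr)=f(\eta,S)+\sum_{u\in U}a_u,
\end{align*}
an identity valid in $[0,\infty]$ because every term is nonnegative.

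Next I would verify condition~\ref{main.condition}. Assuming every $f(\sigma_U(\eta,S))$ is finite, so that $f(\eta,S)$ and each $a_u$ are finite, I substitute this identity into \eqref{eq:Delta.pp} to get
\begin{align*}
  \Delta_{P^1_\proc}\cdots\Delta_{P^n_\proc}f(\eta,S)
    =f(\eta,S)\sum_{U\subseteq[n]}(-1)^{n-\abs{U}}
    +\sum_{u=1}^n a_u\sum_{\substack{U\subseteq[n]\\U\ni u}}(-1)^{n-\abs{U}}.
\end{align*}
The first inner sum is $(1-1)^n=0$ for $n\geq1$. For the coefficient of $a_u$, writing $U=\{u\}\cup U'$ with $U'\subseteq[n]\setminus\{u\}$ gives $\sum_{U\ni u}(-1)^{n-\abs{U}}=(1-1)^{n-1}$, which equals $0$ for $n\geq2$ and $1$ for $n=1$. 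Hence the mixed difference vanishes when $n\geq2$, while for $n=1$ it equals $a_1\geq0$; in either case $(-1)^n\Delta_{P^1_\proc}\cdots\Delta_{P^n_\proc}f(\eta,S)\leq0$, as required.

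Finally, conditions~\ref{infinite.increasing} and~\ref{infinite.concave} are immediate from the decomposition and $a_u\geq0$. If $f(\eta,S)=\infty$, then $f(\sigma_{P^1_\proc}(\eta,S))=f(\eta,S)+a_1=\infty$, giving~\ref{infinite.increasing}. If $f(\sigma_{P^1_\proc}\sigma_{P^2_\proc}(\eta,S))=f(\eta,S)+a_1+a_2=\infty$, then one of these three nonnegative summands is infinite: if $f(\eta,S)=\infty$ or $a_1=\infty$ then $f(\sigma_{P^1_\proc}(\eta,S))=\infty$, and if $a_2=\infty$ then $f(\sigma_{P^2_\proc}(\eta,S))=\infty$, which is~\ref{infinite.concave}. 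The only real subtlety is the bookkeeping in the decomposition step---confirming that relabeling the added frogs and reapplying \eqref{eq:additivity} genuinely produces the affine identity---so that is where I would take the most care.
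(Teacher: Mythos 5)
Your proof is correct and takes essentially the same route as the paper: both hinge on the affine identity $f(\sigma_U(\eta,S)) = f(\eta,S) + \sum_{u \in U} a_u$ derived from \eqref{eq:additivity}, after which the higher differences vanish and the infinite-value conditions follow from nonnegativity. The only cosmetic difference is that you verify the vanishing of the mixed differences for $n \geq 2$ by direct inclusion--exclusion in \eqref{eq:Delta.pp}, whereas the paper phrases the same cancellation via the generalized difference operator $D_a$ applied to the identity function.
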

\begin{proof}
  Fix $(\eta,S)$ and paths $P^1_\proc,\ldots,P^m_\proc$ originating at $v$.
  Let $b_i=f\bigl(\sigma_{P^i_\proc}(\kappa_{v}(\eta,S))\bigr)$, and let $b = f(\eta,S)$.
  As in the previous lemma, we use the notation $\sigma_U$ 
  given in \eqref{eq:sigma.U}.
  From \eqref{eq:additivity}, for any $U\subseteq[m]$,
  \begin{align*}
    f(\sigma_U(\eta,S)) = b + \sum_{i\in U} b_i.
  \end{align*}
  First, consider the case that $b,b_1,\ldots,b_m<\infty$.
  Let 
  \begin{align*}
    D_a h(x) = h(x+a)-h(x),
  \end{align*}
  generalizing the difference operator $D$ given in \eqref{eq:D.def}.
  The operator $D_a$ satisfies
  \begin{align*}
    D_{a_1}\cdots D_{a_m} h(x) &= \sum_{U\subseteq [m]}(-1)^{m-\abs{U}} h\Biggl(x+\sum_{i\in U}a_i\Biggr),
  \end{align*}
  proven identically as \eqref{eq:D} and \eqref{eq:Delta.pp}. Comparing with \eqref{eq:Delta.pp},
  we have
  \begin{align*}
    \Delta_{P^1_\proc}\cdots\Delta_{P^m_\proc}f(\eta,S) = D_{b_1}\cdots D_{b_m}\id(b),
  \end{align*}
  where $\id(x)=x$. For $m\geq 2$, this is equal to zero, as follows from the second and higher
  partial derivatives of $\id$ being zero. For $m=1$, this is nonnegative, because $b_1\geq 0$
  by our assumption that $f$ takes nonnegative values. This shows that condition~\ref{main.condition}
  holds. Conditions~\ref{infinite.increasing} and~\ref{infinite.concave} follow immediately from 
  \eqref{eq:additivity}.  
\end{proof}
Next, we show that icv and pgf statistics are closed under summation.
\begin{lemma}\thlabel{lem:linearity}
  Let $f_1,f_2,\ldots$ be frog model statistics taking values in $[0,\infty]$,
  and let $f=\sum_{i=1}^{\infty} f_i$. If $f_1,f_2,\ldots$ are icv statistics, then
  $f$ is an icv statistic, and if $f_1,f_2,\ldots$ are pgf statistics, then $f$ is
  a pgf statistic.  
\end{lemma}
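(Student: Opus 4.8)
The plan is to prove that the class of icv statistics and the class of pgf statistics are each closed under (countable) summation by verifying the three conditions of \thref{def:declining} directly. The key observation is that the difference operator $\Delta_{P_\proc}$ is linear, so it commutes with finite sums, and that the conditions \ref{main.condition}, \ref{infinite.increasing}, \ref{infinite.concave} are each preserved under sums either by linearity or by a simple monotonicity argument. The main subtlety is that the statistics take values in $[0,\infty]$, so I must handle infinite values carefully, and I must justify interchanging the infinite sum with the difference operators.

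First I would treat a finite sum $f = \sum_{i=1}^N f_i$. For condition~\ref{main.condition} with $m=1,2$ (or all $m$ in the pgf case), I would note that whenever all the terms appearing in the expansion \eqref{eq:Delta.pp} are finite, linearity of $\Delta$ gives
\begin{align*}
  \Delta_{P^1_\proc}\cdots\Delta_{P^m_\proc} f(\eta,S)
    = \sum_{i=1}^N \Delta_{P^1_\proc}\cdots\Delta_{P^m_\proc} f_i(\eta,S).
\end{align*}
Multiplying by $(-1)^m$, each summand on the right is $\leq 0$ since each $f_i$ is an icv (resp.\ pgf) statistic, so the sum is $\leq 0$. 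Condition~\ref{infinite.increasing} follows because $f(\eta,S)=\infty$ forces $f_j(\eta,S)=\infty$ for some $j$, whence $f_j(\sigma_{P^1_\proc}(\eta,S))=\infty$ by \ref{infinite.increasing} applied to $f_j$, and then $f(\sigma_{P^1_\proc}(\eta,S))=\infty$ since all terms are nonnegative. Condition~\ref{infinite.concave} is analogous: if $f(\sigma_{P^1_\proc}\sigma_{P^2_\proc}(\eta,S))=\infty$, then some $f_j$ is infinite there, so by \ref{infinite.concave} for $f_j$ one of $f_j(\sigma_{P^i_\proc}(\eta,S))$ is infinite, and hence the same is true of $f$.

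The hard part is passing from finite to countably infinite sums, and this is where I must be careful about the finiteness hypothesis in \ref{main.condition}. The cleanest route is to use the fact, noted just after \thref{def:declining}, that \ref{infinite.concave} upgrades to the statement that $f(\sigma_U(\eta,S))=\infty$ for some $U$ implies $f(\sigma_{P^i_\proc}(\eta,S))=\infty$ for some single $i$; this lets me reduce to the case where every term in the relevant expansion is finite. Concretely, to check \ref{main.condition} for the infinite sum $f=\sum_{i\geq 1} f_i$ when all quantities in the expansion are finite, I would observe that finiteness of $f(\sigma_U(\eta,S))=\sum_{i\geq 1} f_i(\sigma_U(\eta,S))$ for every $U$ in the expansion forces each $f_i(\sigma_U(\eta,S))$ to be finite, so I may apply the finite-sum identity termwise and then sum the resulting nonpositive quantities over $i$, using that the partial sums $\sum_{i=1}^N f_j$ converge up to $f$ by monotone convergence of nonnegative terms. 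Conditions~\ref{infinite.increasing} and~\ref{infinite.concave} carry over verbatim from the finite case, since their proofs only used that some single term is infinite. I expect the bookkeeping around the finiteness clause in \ref{main.condition} to be the only genuine obstacle; everything else is linearity and nonnegativity.
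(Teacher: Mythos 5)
Your treatment of condition~\ref{main.condition} is correct and is essentially the paper's own argument: when every $f(\sigma_U(\eta,S))$ in the expansion is finite, so is every $f_i(\sigma_U(\eta,S))$, the expansion splits as a sum over $i$, and each summand is nonpositive after multiplying by $(-1)^m$. The finite-sum case of conditions~\ref{infinite.increasing} and~\ref{infinite.concave} is also fine. The genuine gap is your final claim that these two conditions ``carry over verbatim from the finite case, since their proofs only used that some single term is infinite.'' That is precisely why they do \emph{not} carry over: for a countably infinite sum of nonnegative terms, $f(\eta,S)=\sum_{i\geq 1}f_i(\eta,S)=\infty$ does not force any single $f_j(\eta,S)$ to be infinite, so your argument has no infinite $f_j$ to which conditions~\ref{infinite.increasing} or~\ref{infinite.concave} can be applied, and it collapses. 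This is not a pathological corner case: in the paper's main application, \thref{lem:returns}, one writes $r=\sum_u r_u$, and recurrence typically occurs with every $r_u$ finite --- infinitely many vertices each sending back finitely many frogs --- so the configurations your proof misses are exactly the ones of interest.

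The repair, which is the route the paper takes, is to prove termwise inequalities valid in $[0,\infty]$ and then sum over $i$. Combining conditions~\ref{main.condition} (with $m=1$) and~\ref{infinite.increasing} for each $f_i$ gives $f_i(\eta,S)\leq f_i\bigl(\sigma_{P^1_\proc}(\eta,S)\bigr)$ regardless of which quantities are finite; summing over $i$ yields $f(\eta,S)\leq f\bigl(\sigma_{P^1_\proc}(\eta,S)\bigr)$ even when both sides are infinite, which is condition~\ref{infinite.increasing} for $f$. Likewise, conditions~\ref{main.condition} (with $m=2$, together with $f_i\geq 0$), \ref{infinite.increasing}, and~\ref{infinite.concave} for each $f_i$ give the termwise subadditivity $f_i\bigl(\sigma_{P^1_\proc}\sigma_{P^2_\proc}(\eta,S)\bigr)\leq f_i\bigl(\sigma_{P^1_\proc}(\eta,S)\bigr)+f_i\bigl(\sigma_{P^2_\proc}(\eta,S)\bigr)$; summing over $i$ shows that if $f\bigl(\sigma_{P^1_\proc}\sigma_{P^2_\proc}(\eta,S)\bigr)=\infty$ then one of $f\bigl(\sigma_{P^i_\proc}(\eta,S)\bigr)$ is infinite, which is condition~\ref{infinite.concave} for $f$. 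Finally, note that your appeal to the ``upgraded'' form of~\ref{infinite.concave} to reduce to the all-finite case is both unnecessary for condition~\ref{main.condition} --- that condition already carries the finiteness hypothesis --- and would be circular if applied to $f$ itself, since condition~\ref{infinite.concave} for $f$ is part of what is being proved.
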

\begin{proof}
  Suppose that $f_1,f_2,\ldots$ are pgf statistics.
  We have
  \begin{align*}
    (-1)^m\Delta_{P^1_\proc}\cdots\Delta_{P^m_\proc}f(\eta, S)
      &= \sum_{i=1}^{\infty}(-1)^m\Delta_{P^1_\proc}\cdots\Delta_{P^m_\proc}f_i(\eta, S)
  \end{align*}
  when all quantities in the expansion of the left hand side are finite.
  All quantities on the right hand side are finite as well, and all are
  nonpositive since $f_i$ is a pgf statistic for all $i$, which
  shows that condition~\ref{main.condition} is satisfied. For condition~\ref{infinite.increasing},
  we note that by conditions~\ref{main.condition} and~\ref{infinite.increasing} applied to $f_i$,
  \begin{align*}
    \sum_{i=1}^{\infty} f_i(\eta,S) \leq \sum_{i=1}^{\infty}f_i\bigl(\sigma_{P^1_\proc}(\eta,S)\bigr)
  \end{align*}
  even when the left hand side is infinite. And for condition~\ref{infinite.concave},
  by conditions~\ref{main.condition} and~\ref{infinite.concave} applied to $f_i$,
  \begin{align*}
    \sum_{i=1}^{\infty} f_i\bigl(\sigma_{P^1_\proc}\sigma_{P^2_\proc}(\eta,S)\bigr)
      &\leq \sum_{i=1}^{\infty}f_i\bigl(\sigma_{P^1_\proc}(\eta,S)\bigr) +
         \sum_{i=1}^{\infty}f_i\bigl(\sigma_{P^2_\proc}(\eta,S)\bigr),
  \end{align*}
  even when the left hand side is infinite.  The proof is identical for the icv case.
\end{proof}

Now, we apply \thref{lem:stat.builder,lem:stat.builder.sum} to prove that various frog model
statistics are pgf (and hence also icv). We start with some counts of visited sites.
\begin{prop}\thlabel{lem:visited}
  For $t\in\mathbb{N}\cup\{\infty\}$ and any nonroot vertex~$u$,
  let $\viu{t}{u}(\eta,S)$ be an indicator on site~$u$ being visited in the frog model $(\eta,S)$
  by time~$t$. Let $\vi{t}(\eta,S)$ be the total number of nonroot sites visited
  by time~$t$. Then both $\viu{t}{u}$ and $\vi{t}$ are continuous icv and pgf statistics.
\end{prop}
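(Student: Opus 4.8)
The plan is to reduce the claim to \thref{lem:stat.builder} and \thref{lem:stat.builder.sum}, which were engineered exactly for this purpose, and then handle continuity separately. First I would treat the single-site indicator $\viu{t}{u}$. This is a $\{0,1\}$-valued statistic, so I would verify the max-decomposition \eqref{eq:subadditivity} at \emph{every} vertex $v$: the site $u$ is visited by time $t$ if and only if it is visited by a frog originating somewhere, and removing all frogs starting at $v$ and then re-adding them one at a time shows that $u$ is visited exactly when it is visited either in the model $\kappa_v(\eta,S)$ with the $v$-frogs deleted, or in one of the models where a single $v$-frog is reinserted. The point is that whether $u$ gets visited is monotone under adding frogs, and the contribution of the frogs at $v$ is a logical OR over those frogs — this is precisely the subadditive/max structure that \eqref{eq:subadditivity} encodes. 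Granting this, \thref{lem:stat.builder} yields $(-1)^n\Delta_{P^1_\proc}\cdots\Delta_{P^n_\proc}\viu{t}{u}(\eta,S)\le 0$ for all $n$ and all paths based at $v$, which is condition~\ref{main.condition} for all $m\ge 1$.

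The subtlety is that \thref{lem:stat.builder} only gives the inequality for paths originating at the single distinguished vertex $v$ appearing in \eqref{eq:subadditivity}, whereas \thref{def:declining} requires condition~\ref{main.condition} for paths based at an \emph{arbitrary} common vertex. So the key step is to observe that the decomposition \eqref{eq:subadditivity} holds for \emph{every} choice of $v\in G$, and to apply \thref{lem:stat.builder} separately with each such $v$; since the paths $P^1_\proc,\ldots,P^m_\proc$ in the definition share a common origin, that origin is some specific vertex, and I invoke the lemma with $v$ equal to it. I would also check the infinite-value conditions~\ref{infinite.increasing} and~\ref{infinite.concave}, but for a $\{0,1\}$-valued statistic these are automatic: $\viu{t}{u}$ never takes the value $\infty$, so both conditions hold vacuously. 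This shows $\viu{t}{u}$ is a pgf statistic, and hence an icv statistic since the pgf conditions are strictly stronger.

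For the total count $\vi{t}=\sum_{u\ne\emptyset}\viu{t}{u}$, I would simply invoke \thref{lem:linearity}: a countable sum of pgf statistics is a pgf statistic. Since each $\viu{t}{u}$ is a pgf statistic by the previous paragraph, $\vi{t}$ is as well, and likewise for the icv property. This is clean and avoids re-deriving the difference-operator inequalities for the sum directly. (Alternatively one could apply \thref{lem:stat.builder.sum} to $\vi{t}$ at each vertex, but the linearity route is shorter and already packages the infinite-value bookkeeping.)

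Finally I would verify continuity: if $\eta_k(v)\nearrow\eta(v)$ for all $v$, I must show $\viu{t}{u}(\eta_k,S)\nearrow\viu{t}{u}(\eta,S)$ and $\vi{t}(\eta_k,S)\nearrow\vi{t}(\eta,S)$. The indicator is monotone nondecreasing in each $\eta(v)$, so the limit exists and is dominated by $\viu{t}{u}(\eta,S)$; I expect this to be the main obstacle, because for $t=\infty$ a visit to $u$ in $(\eta,S)$ could in principle depend on infinitely many frogs, and I must argue that any realized visit is witnessed by a \emph{finite} subconfiguration that eventually appears in some $\eta_k$. The argument is that any actual visit to $u$ occurs along a finite chain of activations — finitely many frogs, each waking the next, culminating in one reaching $u$ — and every frog in that finite chain is present once $k$ is large enough that $\eta_k(v)\ge$ the index of that frog at its home vertex $v$; since only finitely many $(v,i)$ pairs are involved and $\eta_k(v)\nearrow\eta(v)$, all of them appear simultaneously for large $k$, so $\viu{t}{u}(\eta_k,S)=1$ eventually. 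Monotone convergence of the indicators then gives convergence of the sum $\vi{t}$, completing the verification that both statistics are continuous.
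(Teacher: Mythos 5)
Your proposal is correct and follows essentially the same route as the paper: verify the max-decomposition \eqref{eq:subadditivity} so that \thref{lem:stat.builder} yields condition~\ref{main.condition} for paths based at an arbitrary common origin (the paper splits this into the cases $v\neq u$ and $v=u$, the latter being degenerate since frogs at $u$ cannot affect whether $u$ is visited; the paper also spells out the chain-pruning step---a witnessing activation chain can be trimmed so that its frogs originate at distinct vertices---which is the substance behind your ``logical OR'' claim), then deduce the statement for $\vi{t}$ via \thref{lem:linearity}, and prove continuity by the finite-activation-chain argument. One caution: your parenthetical alternative of applying \thref{lem:stat.builder.sum} directly to $\vi{t}$ would not work, because \eqref{eq:additivity} fails for $\vi{t}$---a site visited in several of the single-frog models is counted only once in the full model, so the count of visited sites is subadditive, not additive, over the frogs at $v$---but since you explicitly chose the linearity route instead, this does not affect your proof.
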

\begin{proof}
  First, we show that $\viu{t}{u}$ is a pgf statistic.  
  Let $v\neq u$ be a nonroot vertex.
  We claim that \eqref{eq:subadditivity} is satisfied for $\viu{t}{u}$ and vertex~$v$.
  To prove this, we need to show that $\viu{t}{u}(\eta,S)=1$ if and only if
  $\viu{t}{u}\bigl(\kappa_v(\eta,S)\bigr)=1$ or
  $\viu{t}{u}\bigl(\sigma_{S_\proc(v,i)}(\kappa_{v}(\eta,S))\bigr)=1$ for some $1\leq i\leq \eta(v)$. 
  First, suppose
  $\viu{t}{u}(\eta,S)=1$. This means that there exists a sequence of frogs
  starting with the initial frog and ending with a frog that visits $u$ such that
  each frog activates the next one in the sequence and the
  combined path length is at most $t$. We can assume that all frogs in this sequence
  originate at different vertices, since otherwise we could cut out portions of the sequence
  to make this true.
  If the sequence includes the
  $i$th frog at $v$, then $\viu{t}{u}\bigl(\sigma_{S_\proc(v,i)}(\kappa_{v}(\eta,S))\bigr)=1$. 
  If the sequence does not include any of the frogs
  originating at $v$, then $\viu{t}{u}(\kappa_{v}(\eta,S))=1$.
  The converse is obvious, since if $u$ is visited in time~$t$ by
  $\sigma_{S_\proc(v,i)}(\kappa_{v}(\eta,S))$ or $\kappa_v(\eta,S)$, then it is also
  visited by $(\eta,S)$.
  
  Thus, \thref{lem:stat.builder} applies and shows that
  \begin{align*}
    (-1)^n\Delta_{P^1_\proc}\cdots\Delta_{P^n_\proc}\viu{t}{u}(\eta,S)\leq 0
  \end{align*}
  for any $(\eta,S)$, nonroot vertices $u\neq v$, and paths $P^1_\proc,\ldots,P^n_\proc$ originating
  at $v$. In the case $u=v$,
  \begin{align*}
    (-1)^n\Delta_{P^1_\proc}\cdots\Delta_{P^n_\proc}\viu{t}{u}(\eta,S) = 0,
  \end{align*}
  as the addition of extra frogs at $u$ does not affect whether $u$ is visited. Thus, $\viu{t}{u}$
  is a pgf statistic for any nonroot vertex $u$.
  It follows by \thref{lem:linearity} that $\vi{t}$ is a pgf statistic, since we can express
  it as
  \begin{align}\label{eq:a^t.sum}
    \vi{t}(\eta,S) &= \sum_{v\neq \emptyset} \viu{t}{v}(\eta,S).
  \end{align}
  
  It remains to prove that $\viu{t}{u}$ and $\vi{t}$ are continuous.
  This holds because any frog 
  woken in $(\eta,S)$ relies only on a finite sequence of frogs to wake it. More formally,
  suppose that the components of $\eta_k$ converge upwards to $\eta$ as $k\to\infty$.
  If $\viu{t}{u}(\eta,S)=1$, then for large enough $k$ we have $\viu{t}{v}(\eta_k,S)=1$, because the sequence of
  frogs visiting $u$ in time~$t$ is finite. Thus $\viu{t}{u}(\eta_k,S)\nearrow \viu{t}{u}(\eta,S)$
  as $k\to\infty$, meaning that $\viu{t}{u}$ is continuous. 
  By \eqref{eq:a^t.sum} and monotone convergence, $\vi{t}(\eta_k,S)\nearrow \vi{t}(\eta,S)$, and $\vi{t}$
  is continuous as well.
\end{proof}

\begin{proof}[Proof of \thref{lem:returns}]
  This is a slightly more complicated version of the previous proof.
  For any nonroot vertex~$u$, let $r_u(\eta,S)$ be the number of visits to the root
  in the frog model $(\eta,S)$ by frogs originating at vertex~$u$. Fix some vertex~$v\neq u$
  and frog paths $P^1_\proc,\ldots,P^m_\proc$ originating at $v$, and fix $(\eta,S)$.
  Let $N$ be the total number of visits to the root by paths $S_\proc(u,1),\ldots,S_\proc(u,\eta(u))$.
  Since adding extra frogs at vertex~$v$ affects only whether $u$ is activated, not the number
  of frogs returning from it if activated,
  \begin{align}\label{eq:diff.vert}
    r_u(\sigma_U(\eta,S)) = N \viu{\infty}{u}(\sigma_U(\eta,S))
  \end{align}
  for any $U\subseteq [m]$.
  It then follows from \thref{lem:visited} that 
  $(-1)^m\Delta_{P^1_\proc}\cdots\Delta_{P^m_\proc}r_u(\eta,S)\leq 0$ when all terms in this expansion
  are finite (that is, when $N<\infty$). From the $N=\infty$ case of
  \eqref{eq:diff.vert}, it also follows that
  conditions~\ref{infinite.increasing} and \ref{infinite.concave} hold for all $(\eta,S)$
  and all paths $P^1_\proc,\ldots P^m_\proc$ originating at $v$.
  
  Next, we consider the case $v=u$. 
  If $u$ is visited by $(\eta,S)$, then the number of visits to the root originating at $u$
  is the number of visits to the root by paths $S_\proc(u,1),\ldots,S_\proc(u,\eta(u))$.
  Since modifying the frogs at $u$ does not change whether $u$ is visited, this implies that
  if $\viu{\infty}{u}(\eta,S)=1$,
  \begin{align*}
    r_u(\eta,S) &= \sum_{i=1}^{\eta(u)} r_u\bigl(\sigma_{S_\proc(u,i)}(\kappa_{v}(\eta,S))\bigr).
  \end{align*}
  This equation also holds if $\viu{\infty}{u}(\eta,S)=0$, since then both sides are zero.
  Hence the conditions of \thref{lem:stat.builder.sum} are satisfied,
  and conditions~\ref{main.condition}, \ref{infinite.increasing}, and \ref{infinite.concave}
  are satisfied for all $(\eta,S)$ and all paths originating at $v$.
  
  Thus, we have shown that $r_u$ is a pgf statistic. By writing $r$ as a sum of $r_u$ over $u\in G$, we see
  that $r$ is also a pgf statistic by \thref{lem:linearity}, and by the same argument as in the
  proof of \thref{lem:visited}, this statistic is continuous.
\end{proof}

Having established our comparison theorem and that several frog model statistics are icv and pgf statistics, we can now establish the relevant corollaries from Section \ref{sec:intro}.

\begin{proof}[Proofs of Corollaries~\ref{cor:det} and \ref{cor:trans}]
  We apply \thref{thm:comparison}, \thref{lem:returns}, and
\thref{prop:maximals}\ref{part:Jensen}, along with the observation made in Section~\ref{sec:orders} that  
$\P[X=\infty]\leq\P[Y=\infty]$ if $X\icvprec Y$.
\end{proof}

\begin{proof}[Proof of \thref{cor:treek}]
  The transience part of this result is a consequence of \cite[Proposition~15]{HJJ2}.
  The recurrence part follows from \thref{cor:det}, \cite[Theorem~1]{HJJ2}, and
  \cite[Theorem~1]{JJ3_log}.
\end{proof}

\begin{proof}[Proof of \thref{cor:Zd.transience}]
  This is proven the same way as Corollaries~\ref{cor:det} and \ref{cor:trans},
  except that \thref{prop:maximals}\ref{part:Bernoulli} is used instead of 
  \thref{prop:maximals}\ref{part:Jensen}.
\end{proof}

\begin{proof}[Proof of \thref{cor:Zd.shape}]
For $v \in \mathbb Z^d$, let $T(v)$ and $T'(v)$ be the time that the vertex $v$ is activated for the frog models with i.i.d.-$\pi$ and i.i.d.-$\pi'$ frogs per site, respectively. Let $\viu{t}{v} = \mathbf 1\{T(v) \leq t\}$ be an indicator that $v$ has been activated by time $t$, and similarly for $\viu{t}{v}'$. 
By \thref{lem:visited},  $\viu{t}{v}$ and $\viu{t}{v}'$ are continuous pgf statistics. Moreover, we can express $T(v)$ and $T'(v)$ in terms of these statistics:
\begin{align*}
T(v) &= \textstyle \sum_{t=0}^\infty (1- \viu{t}{v}), \qquad  T'(v) = \sum_{t=0}^\infty (1- \viu{t}{v}').
\end{align*}
By \thref{thm:comparison}, we have $\viu{t}{v}\pgfprec\viu{t}{v}'$,
and hence $\E \viu{t}{v}\leq \E\viu{t}{v}'$. Apply this, along with Fubini's theorem, to the expressions for $T(v)$ and $T'(v)$ to obtain
\begin{align}\E T'(v) \leq \E T(v)\label{eq:ET}.\end{align}

In the proof of \cite[Theorem~1.1]{random_shape}, the limiting shapes are determined by functions $\mu$ and $\mu'$ with domain $\mathbb R^d$ defined via Kingman's subadditive ergodic theorem by
\begin{align}
\mu(v) = \lim_{n \to \infty} \f{T(nv)}{n}= \inf_{n \geq 1} \f{\E T(nv)}{n}, \label{eq:mu} \end{align}
 where $v \in \mathbb Z^d$, with $\mu'$ defined analogously. 
 After interpolating to all of $\mathbb R^d$ (see Step~5 in the proof of \cite[Theorem~1.1]{random_shape}), 
 the limiting shape is given by
$\mathbf{A} = \{x \in \mathbb R^d \colon \mu(x) \leq 1\}.$
The set $\mathbf{A}'$ is obtained in the same fashion. To deduce that $\mathbf{A} \subseteq \mathbf{A}'$, it then suffices to show that $\mu'(v) \leq \mu(v)$ for $v \in \mathbb Z^d$. This follows from \eqref{eq:ET} applied to the expected value formulation of $\mu$ in \eqref{eq:mu}.
\end{proof}

\begin{proof}[Proof of \thref{cor:death}]
  This also has the same proof as Corollaries~\ref{cor:det}, \ref{cor:treek}, and \ref{cor:trans}, 
  except that \thref{lem:visited} replaces \thref{lem:returns}.
\end{proof}
 
\begin{remark}\thlabel{rmk:pgf.applications}
  As we mentioned in the introduction, one of the motivations for part~\ref{item:comparison.pgf} of 
  \thref{thm:comparison} is that Lemma~10 from \cite{HJJ1} and similar results are direct corollaries of it. 
  Besides providing a satisfying explanation of why Lemma~10 holds, this is potentially useful
  in deriving other recurrence results. 
  
  Here, we describe Lemma~10 in more detail and explain why it
  follows from \thref{thm:comparison}\ref{item:comparison.pgf}.
  The lemma is a monotonicity result for an operator $\Aa$ acting on probability distributions on the
  nonnegative integers. (In \cite{HJJ1}, the operator is described as acting on 
  probability generating functions, but this comes to the same thing.) The operator can be defined as
  follows. Let $\pi$ be a probability distribution
  on the nonnegative integers. Consider a binary tree truncated to four vertices as in 
  Figure~\ref{fig:truncated.tree}. Place one frog on $\varnothing$ and one frog on $\varnothing'$,
  and independently sample from $\pi$ to decide the number of frogs on $u$ and $v$. The frog paths
  are random nonbacktracking walks stopped when a frog reaches a leaf. Now, run the frog model starting
  with the frog at $\varnothing$ active until all frogs are stopped. Define $\Aa\pi$ to be the distribution
  of frogs terminating at $\varnothing$. In \cite{HJJ1}, the operator was defined in a different way, 
  but it turns out to be equivalent. See also \cite[Section~2.2]{HJJ2} and \cite[Section~3.1.2]{JJ3_log}
  for similar constructions.
  
  The result of Lemma~10 is that $\pi\pgfprec\pi'$ imples that $\Aa\pi\pgfprec\Aa\pi'$.
  As $\Aa\pi$ is the distribution of visits to $\varnothing$ in the frog model on the truncated graph,
  it is a continuous pgf statistic by \thref{lem:returns}. Thus the lemma is a consequence of
  \thref{thm:comparison}\ref{item:comparison.pgf}.
\end{remark}
    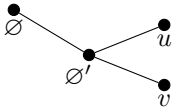
\begin{figure}
    \begin{center}
\begin{tikzpicture}[scale = 1,tv/.style={circle,fill,inner sep=0,
    minimum size=0.15cm,draw},walklines/.style={very thick, blue,bend left=15}]
    \path (0,0) node[tv] (R) {}
        (1,-0.6) node[tv] (L1) {}
        (1, 0.6)  node (L2) {}
        (2, -0.2) node[tv] (L12) {}
        (2,-1) node[tv] (L11) {}
        (L12)+(1,-0.3) coordinate (L121)
             +(1, 0.3) coordinate (L122)
        (L11)+(1,-0.3) coordinate (L111)
             +(1, 0.3) coordinate (L112);
  \draw 
       (R)--(L1);
\node[below] at (R){$\emptyset$};
\node[below] at (L12){$u$};
\node[below] at (L11){$v$};
\node[below] at (L1){$\emptyset'$\;\;\; };
    \draw (L1)--(L11);
       \draw (L1) -- (L12);

\end{tikzpicture}
    \end{center}
    \caption{A graph used to define the operator $\Aa$ in \cite{HJJ1}.}
    \label{fig:truncated.tree}

    \end{figure}

\section*{Appendix}

\begin{proof}[Proof of \thref{prop:discrete.ineq}]
  If $\varphi(x)$ is increasing and concave on $[0,\infty)$, then it is easily seen that
  $D\varphi(k)\geq 0$ and $D^2\varphi(k)\leq 0$ for all $k$. Similarly, one can
  easily check that if $\varphi(x)=1-p^x$, then $(-1)^nD^n\varphi(k)\leq 0$ for all
  $k$. This proves that the criteria stated in \ref{item:discrete.ineq.icv}
  and \ref{item:discrete.ineq.pgf} imply icv and pgf dominance, respectively.
  
  For the other direction in \ref{item:discrete.ineq.icv}, suppose that
  $X\icvprec Y$. Let $\varphi$ be a test function defined on the nonnegative integers
  satisfying $D \varphi(k)\geq 0$ and $D^2\varphi(k)\leq 0$ for all $k$. This can be
  extended to an increasing concave function on $[0,\infty)$ by linearly interpolating
  between integer points, for example, and hence $\E\varphi(X)\leq\E\varphi(Y)$
  by the assumption that $X\icvprec Y$.
  
  For \ref{item:discrete.ineq.pgf}, suppose that $X\pgfprec Y$, and let $\varphi$ be a bounded test
  function on the nonnegative integers satisfying $(-1)^n D^n\varphi(k)\leq 0$ for
  all $k$. We now appeal to a classic
  result of Hausdorff's stating that a sequence $f(0), f(1),\ldots$ can be represented as
  a moment sequence
  \begin{align*}
    f(k) &= \int_{[0,1]}u^k\,\sigma(du)
  \end{align*}
  for some positive measure $\sigma$ if and only if
  \begin{align*}
    (-1)^n D^n f(k)\geq 0
  \end{align*}
  for all $n\geq 0$. (See \cite[Theorem~2.6.4]{Akh}, and note that $\Delta^n$ defined there
  is equal to $(-1)^nD^n$.)
  Let $C$ be an upper bound on $\varphi$, and apply Hausdorff's result to
  $C-\varphi(k)$ to obtain the representation
  \begin{align*}
    C - \varphi(k) &= \int_{[0,1]}u^k\,\sigma(du)
  \end{align*}
  for some measure $\sigma$.
  Defining
  \begin{align*}
    \psi(x) &= \int_{[0,1]}u^x\,\sigma(du),
  \end{align*}
  for $x>0$ and extending the function continuously to $x=0$, we obtain a completely monotone function
  $\psi$ satisfying
  \begin{align*}
    \psi(k) &= C-\varphi(k),\qquad k\in\{1,2,\ldots\},\\
    \psi(0) &= \int_{(0,1]}\sigma(du)\leq C-\varphi(0).
  \end{align*}
  Now, we evaluate
  \begin{align*}
    \E \varphi(X) &= C - \E\psi(X) - \P[X=0]\bigl(C - \psi(0)-\varphi(0)\bigr),\\
    \E \varphi(Y) &= C - \E\psi(Y) - \P[Y=0]\bigl(C - \psi(0)-\varphi(0)\bigr).
  \end{align*}
  Since $\psi$ is completely monotone and $X\pgfprec Y$, we have $\E \psi(X)\geq\E\psi(Y)$.
  The relation $X\pgfprec Y$ implies that $\P[X=0]\geq\P[Y=0]$, and together with
  $C - \psi(0)-\varphi(0)\geq 0$, this implies that $\E\varphi(X)\leq\E\varphi(Y)$.
\end{proof}

\begin{proof}[Proof of \thref{prop:maximals}]
  Part~\ref{part:Jensen} follows immediately from Jensen's inequality.
  For part~\ref{part:Bernoulli}, 
  let $\varphi$ be an arbitrary increasing concave function on $[0,\infty)$. 
  To simplify the algebra, let
  $U=X-k$, $V=Y-k$, and $\psi(x)=\varphi(x+k)-\varphi(k)$.
  With these replacements, our goal is to show that
  $\E \psi(U)\leq \E\psi(V)$. We know that $\E U\in[0,1]$
  and that $V$ is Bernoulli, and we know that $\psi$ is increasing and concave on
  $[-k,\infty)$ and satisfies $\psi(0)=0$.

  Since $V$ is Bernoulli with mean at least $\E U$, 
  \begin{align}\label{eq:bern}
    \E\psi(V) \geq (\E U)\psi(1).
  \end{align}
  
  Define
  \begin{align*}
    a &= \E[U\mid U\leq 0], & p&=\P[U\leq 0],\\
    b &= \E[U\mid U\geq 1], & q=1-p&=\P[U\geq 1].
  \end{align*}
  If $p=0$ or $q=0$, then $U$ is deterministic and the result is trivial because $U$ and $V$
  have the same distribution. Thus we can assume that
  both conditional expectations above are well defined.
  
  Applying Jensen's inequality,
  \begin{align}\label{eq:jns}
    \E \psi(U) &= p\E[\psi(U)\mid U\leq 0] + q\E[\psi(U)\mid U\geq 1]
                  \leq p\psi(a) + q\psi(b).
  \end{align}
  As $a\leq 0$ and $b\geq 1$, the points $(a,\psi(a))$ and $(b,\psi(b))$ lie under the secant line
  connecting $(0,0)$ and $(1,\psi(1))$ by the concavity of $\psi$. Thus $\psi(a)\leq a\psi(1)$
  and $\psi(b)\leq b\psi(1)$. Applying this to \eqref{eq:jns} and combining with \eqref{eq:bern}
  gives
  \begin{align*}
    \E\psi(U) &\leq (pa + qb)\psi(1) = (\E U)\psi(1)\leq \E\psi(V).\qedhere
  \end{align*}
\end{proof}

To prove \thref{lem:composition}, we will need a few technical statements that we will use to replace
our discrete derivatives with continuous ones.
For a function $g$ on $\RR^n$, let $\partial_ig$ denote the partial derivative
with respect to the $i$th coordinate, and let $\Delta_i g$ denote the discrete derivative in
the $i$th coordinate; that is,
\begin{align*}
  \Delta_i g(x_1,\ldots,x_n) &= g(x_1,\ldots,x_i+1,\ldots,x_n) - g(x_1,\ldots,x_n).
\end{align*}    

\begin{lemma}\thlabel{lem:multilinearize}
  For any $g\colon\{0,1\}^n\to\RR$, there is a unique multilinear polynomial $p(x_1,\ldots,x_n)$
  that matches $g$ on $\{0,1\}^n$. Furthermore, for distinct $b_1,\ldots,b_k\in[n]$
  and for $x_1,\ldots,x_n\in\{0,1\}$,
  \begin{align}\label{eq:multilinearize.derivatives}
    \partial_{b_1}\cdots\partial_{b_k} p(x_1,\ldots,x_n)&=\Delta_{b_1}\cdots\Delta_{b_k}
    g(x_1\1\{1\notin B\},\ldots,x_n\1\{n\notin B\}).
  \end{align}
\end{lemma}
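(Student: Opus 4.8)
The plan is to establish the two claims separately: first uniqueness and existence of the multilinear interpolating polynomial, then the derivative formula \eqref{eq:multilinearize.derivatives}.

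For the first claim, I would argue that the space of multilinear polynomials in $n$ variables has dimension $2^n$, spanned by the monomials $\prod_{i\in S}x_i$ over subsets $S\subseteq[n]$. Since there are also exactly $2^n$ points in $\{0,1\}^n$, the evaluation map sending a multilinear polynomial to its values on $\{0,1\}^n$ is a linear map between spaces of equal dimension. To get an isomorphism it suffices to check injectivity, i.e.\ that the only multilinear polynomial vanishing on all of $\{0,1\}^n$ is the zero polynomial. This can be seen by an explicit construction: for each point $c\in\{0,1\}^n$ the indicator polynomial $\prod_{i:c_i=1}x_i\prod_{i:c_i=0}(1-x_i)$ is multilinear and equals $\1\{x=c\}$ on $\{0,1\}^n$, so $p(x)=\sum_{c\in\{0,1\}^n}g(c)\prod_{i:c_i=1}x_i\prod_{i:c_i=0}(1-x_i)$ interpolates $g$, giving existence, and a dimension count then forces uniqueness.

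For the derivative formula, the key observation is that for a \emph{multilinear} polynomial, the partial derivative $\partial_{b}p$ does not depend on $x_b$ (the variable $x_b$ appears only to the first power, so differentiating removes it), and evaluating a multilinear polynomial at $x_b=0$ versus $x_b=1$ reproduces exactly the discrete derivative $\Delta_b$. More precisely, for any coordinate $b$ and any multilinear $p$, we have $\partial_b p(x) = p(x)|_{x_b=1}-p(x)|_{x_b=0}$, since writing $p = x_b q + r$ with $q,r$ not involving $x_b$ gives $\partial_b p = q$ while the right-hand side is also $q$. The right-hand side is precisely $\Delta_b g$ evaluated with the $b$th coordinate zeroed out. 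I would then iterate this identity over the distinct indices $b_1,\ldots,b_k$: each partial derivative $\partial_{b_j}$ both applies a discrete difference $\Delta_{b_j}$ and forces the $b_j$th input to be read at $0$, which accounts for the substitution $x_i\1\{i\notin B\}$ on the right-hand side of \eqref{eq:multilinearize.derivatives}, where $B=\{b_1,\ldots,b_k\}$. Because partial derivatives commute and each kills dependence on its own variable, the order of application is immaterial and the zeroing-out of all coordinates in $B$ happens simultaneously.

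The main obstacle, and the step requiring the most care, is tracking the bookkeeping in the induction: one must verify that differentiating in $x_{b_j}$ and then implicitly evaluating that coordinate at $0$ interacts correctly with the differences already taken in the other coordinates of $B$, and that the surviving free variables $x_i$ for $i\notin B$ are untouched. I would handle this by proving the single-variable identity $\partial_b p = \Delta_b(\,\cdot\,)|_{x_b=0}$ as a clean lemma for multilinear $p$, then inducting on $k$, at each stage noting that $\partial_{b_1}\cdots\partial_{b_{k-1}}p$ remains multilinear in the remaining variables so the identity applies again. Everything else is a routine dimension count and the explicit interpolation formula.
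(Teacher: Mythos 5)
Your proposal is correct and follows essentially the same route as the paper's proof: the explicit indicator-polynomial interpolation formula combined with a dimension count for existence and uniqueness, and then the single-coordinate identity $\partial_b p = \Delta_b p\big|_{x_b=0}$ iterated over the coordinates of $B$ using the fact that the operators commute and that $\partial_b p$ does not depend on $x_b$. The only cosmetic difference is in how that one-variable identity is verified: you write $p = x_b q + r$ with $q,r$ free of $x_b$, whereas the paper integrates $\partial_b p$ over $x_b \in [0,1]$; both arguments are equally valid.
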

\begin{proof}
  We construct $p$ as
  \begin{align*}
    p(x_1,\ldots,x_n) = \sum_{(t_1,\ldots,t_n)\in\{0,1\}^n} g(t_1,\ldots,t_n)\prod_{i=1}^n 
                           \Bigl( (1-x_i)\1_{t_i=0} + x_i\1_{t_i=1}\Bigr),
  \end{align*}
  which is multilinear and
  matches $g$ when evaluated on $\{0,1\}^n$. 
  Multilinear polynomials on $n$ variables form a $2^n$-dimensional vector space.
  The map $g\mapsto p$ given above is then an injective
  linear map between $2^n$-dimensional vector spaces. Hence, it is a bijection,
  showing uniqueness of $p$.

  For any multilinear polynomial $p$, observe that $\partial_i p(x_1,\ldots,x_n)$ does not depend on $x_i$. 
    Thus
    \begin{align*}
      \partial_i p(x_1,\ldots,x_n)
        &=\int_0^1 \partial_ip(x_1,\ldots,x_{i-1},h,x_{i+1},\ldots,x_n)\,dh\\
        &= \Delta_i p(x_1,\ldots,x_{i-1},0,x_{i+1},\ldots,x_n).
    \end{align*}
    Observing that $\partial_i$ and $\Delta_j$ commute, repeated application of this shows that
    \begin{align*}
      \partial_{b_1}\cdots\partial_{b_k} p(x_1,\ldots,x_n)&=\Delta_{b_1}\cdots\Delta_{b_k}
          p(x_1\1\{1\notin B\},\ldots,x_n\1\{n\notin B\}),
    \end{align*}
    which proves \eqref{eq:multilinearize.derivatives}.
\end{proof}

\begin{lemma}\thlabel{lem:multilinear.max}
  If $p$ is a multilinear polynomial, then its maximum and minimum on $[0,1]^n$
  are attained on $\{0,1\}^n$.
\end{lemma}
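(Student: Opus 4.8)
The plan is to prove that a multilinear polynomial attains its extrema on $[0,1]^n$ at the corners $\{0,1\}^n$ by exploiting the defining feature of multilinearity: the polynomial is affine (degree at most one) in each variable separately, with all other variables held fixed.

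First I would argue by induction on $n$, or equivalently by a coordinate-by-coordinate reduction. Fix a point $x=(x_1,\ldots,x_n)\in[0,1]^n$ at which $p$ attains, say, its maximum over the compact set $[0,1]^n$ (such a maximizer exists by continuity and compactness). I claim that we may replace each coordinate $x_i$ by either $0$ or $1$ without decreasing the value of $p$, thereby moving the maximizer to a corner. To see this for the first coordinate, write
\begin{align*}
  p(x_1,\ldots,x_n) = A(x_2,\ldots,x_n)\,x_1 + B(x_2,\ldots,x_n),
\end{align*}
which is valid precisely because $p$ is multilinear and hence affine in $x_1$. As a function of $x_1\in[0,1]$ with the other coordinates frozen, this is affine, so its maximum over $[0,1]$ is attained at an endpoint: either $x_1=0$ or $x_1=1$ gives a value at least $p(x_1,\ldots,x_n)$. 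Setting $x_1$ to the appropriate endpoint yields a new point, still in $[0,1]^n$, with $p$-value no smaller than before.

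Next I would repeat this replacement for each coordinate in turn. Crucially, snapping one coordinate to an endpoint does not interfere with the affine structure in the remaining coordinates, since multilinearity holds simultaneously in every variable. After processing all $n$ coordinates, every $x_i\in\{0,1\}$, and the resulting corner point has $p$-value at least that of the original maximizer $x$. Since $x$ was a global maximizer, the value at the corner equals the maximum, so the maximum is attained on $\{0,1\}^n$. The identical argument, replacing ``maximum'' with ``minimum'' and ``at least'' with ``at most'' throughout, handles the minimum: an affine function of one variable on $[0,1]$ also attains its minimum at an endpoint.

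I do not expect a serious obstacle here; the only point requiring care is the bookkeeping that the endpoint-replacement in one coordinate preserves the affine dependence in the others, which is immediate from the definition of a multilinear polynomial. An alternative but essentially equivalent packaging would be a clean induction on $n$: write $p(x_1,\ldots,x_n)=x_1\,q(x_2,\ldots,x_n)+r(x_2,\ldots,x_n)$ with $q,r$ multilinear in the remaining variables, observe that for fixed $(x_2,\ldots,x_n)$ the extremum over $x_1$ is at $0$ or $1$, and apply the inductive hypothesis to the multilinear polynomials $p(0,\cdot)$ and $p(1,\cdot)$ on $[0,1]^{n-1}$. Either framing gives a short, self-contained proof.
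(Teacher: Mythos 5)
Your proof is correct and is essentially the paper's own argument: both exploit that a multilinear polynomial is affine (equivalently, monotone, since $\partial_i p$ does not depend on $x_i$) in each coordinate separately, and both snap the coordinates of an extremizer to $\{0,1\}$ one at a time without worsening the value. The inductive repackaging you mention is the same idea in different clothing, so there is nothing substantive to distinguish the two proofs.
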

\begin{proof}
  As we mentioned,
  $\partial_ip(x_1,\ldots,x_n)$ does not depend on $x_i$. The function $p(x_1,\ldots,x_n)$ is 
  therefore monotone in $x_i$ with the other coordinates held fixed. Thus, if $p$ achieves its
  maximum at $(x_1,\ldots,x_n)$, it must also achieve it either at $(0,x_2,\ldots,x_n)$
  or $(1,x_2,\ldots,x_n)$. Repeating the argument it must also achieve it
  with $x_2$ set to $0$ or $1$, and so on. The identical argument applies to the minimum.
\end{proof}

\begin{proof}[Proof of \thref{lem:composition}]
  Fix $(\eta,S)$ and frog paths $P^1_\proc,\ldots,P^n_\proc$ starting at the same vertex.
  We need to show that
  \begin{align}\label{eq:composition.goal}
    (-1)^n\Delta_{P^1_\proc}\cdots\Delta_{P^n_\proc}(\varphi\circ f)(\eta,S)\leq 0,
  \end{align}
  for $n=1,2$ in case~\ref{item:composition.icv} and for all $n\geq 1$ in 
  case~\ref{item:composition.pgf}. Note that conditions~\ref{infinite.increasing}
  and~\ref{infinite.concave} of the definition of icv and pgf statistics are irrelevant here, since
  $\varphi$ is bounded and $\varphi(\infty)$ is interpreted as $\lim_{x\to\infty}\varphi(x)$.
  
  First, we dispense with the cases where $f$ takes an infinite value.
  If $f(\eta,S)=\infty$, then $f$ continues to take the value $\infty$ when additional frogs
  are added at the same vertex by condition~\ref{infinite.increasing} of the definition.
  It is easy to confirm that the left hand side of \eqref{eq:composition.goal} is then zero.
  For the case that $f(\eta,S)<\infty$ but $f$ takes the value $\infty$ in some term in the left hand side
  of \eqref{eq:composition.goal}, suppose that $P^1_\proc,\ldots,P^n_\proc$ is a minimal set of paths
  so that \eqref{eq:composition.goal} fails.
  By condition~\ref{infinite.concave}, 
  there exists $P^i_\proc$ with $f\bigl(\sigma_{P^i_\proc}(\eta,S)\bigr)=\infty$.
  Take $i=n$ for simplicity.
  Expanding $\Delta_{P^n_\proc}$,
  \begin{align*}
    \Delta_{P^1_\proc}\cdots\Delta_{P^n_\proc}(\varphi\circ f)(\eta,S)
      &= \Delta_{P^1_\proc}\cdots\Delta_{P^{n-1}_\proc}(\varphi\circ f)\bigl(\sigma_{P^n_\proc}(\eta,S)\bigr)
        - \Delta_{P^1_\proc}\cdots\Delta_{P^{n-1}_\proc}(\varphi\circ f)(\eta,S).
  \end{align*}
  The first term on the right hand side is zero, by the argument for the case where $f(\eta,S)=\infty$.
  By minimality of $\{P^1_\proc,\ldots,P^n_\proc\}$, the second term satisfies \eqref{eq:composition.goal}
  with $n$ replaced by $n-1$, which confirms \eqref{eq:composition.goal}.

  For the rest of the proof, we assume that $f$ takes only finite values.
  For $x_1,\ldots,x_n\in \{0,1\}$, let $g(x_1,\ldots,x_n)$ be given by evaluating $f$ on the frog model
  $(\eta, S)$ with frogs $P^i_\proc$ added for each $x_i=1$. In the more formal notation of
  \eqref{eq:sigma.U}, we let   
  $U_x=\{i\in[n]\colon x_i=1\}$ and define
  \begin{align*} 
    g(x_1,\ldots,x_n) &= f(\sigma_{U_x}(\eta,S)).
  \end{align*}
  Let $p$ be the multilinear polynomial matching $g$ on $\{0,1\}^n$.
  For any $x_1,\ldots,x_n\in\{0,1\}$ and $B=\{b_1,\ldots,b_k\}\subseteq[n]$,
  let $V=\{i\in[n]\colon x_i=1\}\setminus B$.
  By \thref{lem:multilinearize},
  \begin{align*}
    \partial_{b_1}\cdots\partial_{b_k} p(x_1,\ldots,x_n)&=\Delta_{b_1}\cdots\Delta_{b_k}
    g\bigl(x_1\1\{1\notin B\},\ldots,x_n\1\{n\notin B\}\bigr)\\
    &= \Delta_{P^{b_1}_\proc}\cdots \Delta_{P^{b_k}_\proc} f\bigl(\sigma_V(\eta, S)\bigr).
  \end{align*}
  Thus, by our assumption on $f$,
  \begin{align}\label{eq:multi.pgf}
    (-1)^n\partial_{b_1}\cdots\partial_{b_k} p(x_1,\ldots,x_n)\leq 0
  \end{align}
  for $k=1,2$ in case~\ref{item:composition.icv} and for $k\geq 1$ in case~\ref{item:composition.pgf}.
  This holds for all $(x_1,\ldots,x_n)\in\{0,1\}^n$ and all $B\subseteq [n]$. Since partial derivatives
  of multilinear polynomials are also multilinear polynomials, \thref{lem:multilinear.max}
  applies and shows that \eqref{eq:multi.pgf} holds for all $(x_1,\ldots,x_n)\in[0,1]^n$.
  
  Now, we finish the job of transferring the problem to work exclusively with $p$
  rather than $f$.  
  By definition of $p$ in the first line and an easy induction in the second,
  \begin{align*}
    \Delta_{P^1_\proc}\cdots\Delta_{P^n_\proc}(\varphi\circ f)(\eta,S)
      &=\Delta_1\cdots\Delta_n(\varphi\circ p)(0,\ldots,0)\\
      &=\int_0^1\cdots\int_0^1\partial_1\cdots\partial_n(\varphi\circ p)(x_1,\ldots,x_n)\,dx_1\cdots dx_n.
  \end{align*}
  Thus, to prove \eqref{eq:composition.goal} it suffices to show that
  \begin{align}\label{eq:composition.final.goal}
    (-1)^n\partial_1\cdots\partial_n(\varphi\circ p)(x_1,\ldots,x_n)\leq 0
  \end{align}
  for $n=1,2$ in case~\ref{item:composition.icv} and for $n\geq 1$ in
  case~\ref{item:composition.pgf}.
  This follows from the multivariate Fa\`a di Bruno formula, the chain rule
  for higher derivatives (see \cite[Proposition~1]{Hardy} for a reference). In full detail,
  the formula states that
  \begin{align}\label{eq:FdB}
    \partial_1\cdots\partial_n(\varphi\circ p)(x_1,\ldots,x_n) &=
      \sum_{\pi} \varphi^{(\abs{\pi})}\bigl(p(x_1,\ldots,x_n)\bigr)
           \prod_{B\in\pi}\partial_{b_1}\cdots\partial_{b_k} p(x_1,\ldots,x_n),
  \end{align}
  where the sum is over
  all set partitions $\pi$ of $[n]$ and $b_1,\ldots,b_k$ are the elements of $B$.
  Now, we fix some partition $\pi$ and determine the sign of its term in the sum.
  In case~\ref{item:composition.pgf},
  by our assumption that $(-1)^m\varphi^{(m)}\leq 0$ and by \eqref{eq:multi.pgf}, its sign is
  \begin{align*}
    (-1)^{\abs{\pi}+1} \prod_{B\in\pi}(-1)^{\abs{B}+1}
      &= (-1)^{\abs{\pi}+1}(-1)^{n}(-1)^{\abs{\pi}}=(-1)^{n+1},
  \end{align*}
  confirming \eqref{eq:composition.final.goal}.
  In case~\ref{item:composition.icv}, this proof also applies, since each partition $\pi$ is into
  at most two blocks and each block has at most two elements.
\end{proof}

\bibliographystyle{amsplain}
\bibliography{frog_paper_iii}

\end{document}